            \newcommand{\marginalnote}[1]{}
\theoremstyle{plain}
\newtheorem{Thm}{Theorem}[section]
\newtheorem{Lem}[Thm]{Lemma}
\newtheorem{Conj}[Thm]{Conjecture}
\newtheorem*{Thm*}{Theorem}
\theoremstyle{remark}
\theoremstyle{definition}
\newtheorem{Not}[Thm]{Notation}
\newtheorem{Def}[Thm]{Definition}
\newtheorem{Defs}[Thm]{Definitions}
\newcommand{\gen}[1]{\left\langle#1\right\rangle}
\newcommand{\ga}{\Gamma}
\newcommand{\transition}{\delta}
\DeclareMathOperator{\Star}{St}
\patchcmd{\epigraph}{\@epitext{#1}}{\itshape\@epitext{#1}}{}{}
\begin{document}
\title{Geodesic growth of right-angled Coxeter groups based on trees}
\author{Laura Ciobanu \& Alexander Kolpakov}
\date{\today}

\maketitle

\begin{abstract}
In this paper we exhibit two infinite families of trees $\{T^1_n\}_{n \geq 17}$ and $\{T^2_n\}_{n \geq 17}$ on $n$ vertices, such that $T^1_n$ and $T^2_n$ are non-isomorphic, co-spectral, with co-spectral complements, and the right-angled Coxeter groups (RACGs) based on $T^1_n$ and $T^2_n$ have the same geodesic growth with respect to the standard generating set. We then show that the spectrum of a tree is not sufficient to determine the geodesic growth of the RACG based on that tree, by providing two infinite families of trees $\{S^1_n\}_{n \geq 11}$ and $\{S^2_n\}_{n \geq 11}$, on $n$ vertices, such that $S^1_n$ and $S^2_n$ are non-isomorphic, co-spectral, with co-spectral complements, and the right-angled Coxeter groups (RACGs) based on $S^1_n$ and $S^2_n$ have distinct geodesic growth. 

Asymptotically, as $n\rightarrow \infty$, each set $T^i_n$, or $S^i_n$, $i=1,2$, has the cardinality of the set of all trees on $n$ vertices. Our proofs are constructive and use two families of trees previously studied by B.~McKay and C.~Godsil.

\bigskip

\noindent 2000 Mathematics Subject Classification: 20E08, 20F65.
\maketitle

\noindent Key words: geodesic growth, Coxeter group, RACG, regular languages.
\end{abstract}

\epigraph{``And then he had a Clever Idea. He would\\ go up very quietly to the Six Pine Trees ...''\\ A.A. Milne, Winnie-the-Pooh}

\section{Introduction}

The \textit{geodesic growth function} of a group $G$ with respect to a finite generating set $S$ counts, for each positive integer $n$, the number of geodesics of length $n$ starting at the identity $1_G$ in the Cayley graph of $G$ with respect to $S$. The \textit{geodesic growth series} is the formal power series that takes the values of the geodesic growth function as its coefficients (see Definition \ref{defgrowth}).

The groups that we consider in this paper, right-angled Coxeter groups, or RACGs, are known to have a regular language of geodesics with respect to the standard generating sets, and therefore rational geodesic growth series (see \cite{JJJ} or \cite[Theorem 4.8.3]{BjornerBrenti} for proofs of these facts). 
Our goal here is to obtain more specific data concerning the geodesic growth of RACGs. Namely, we are interested in extracting information about the geodesic growth series from the defining graph of the group. It is known that non-isomorphic graphs define non-isomorphic RACGs \cite{rad}, and that non-isomorphic RACGs can have equal geodesic growth \cite{AntolinCiobanu}. However, we are interested in knowing how much the similarities or differences between two defining graphs influence the geodesic growth of the corresponding RACGs.   

Two RACGs $G_i = G(\Gamma_i)$, $i=1,2$, with non-isomorphic defining graphs $\Gamma_i$, may have equal standard growth (see Definition \ref{defgrowth} (2)); this can be determined by computing the $f$-polynomials of the graphs $\Gamma_i$ \cite[Proposition 17.4.2]{Davis}. However, their geodesic growth exhibits more subtle properties \cite{AntolinCiobanu}, and in general it is not known which graph theoretic conditions completely determine the geodesic growth of a RACG.
The examples of non-isomorphic graphs $\Gamma_i$ defining RACGs with equal geodesic growth in \cite{AntolinCiobanu} are degree-regular and have cycles. 
In this paper we consider the case when the $\Gamma_i$'s are trees. Although these are some of the simplest classes of graphs, we already encounter a phenomenon that shows a great difference between the behaviour of the standard growth and that of the geodesic growth. The standard growth of each RACG $G_i$ is determined solely by the number of vertices (or edges) in the respective tree $\Gamma_i$, while the geodesic growth can be distinct even for two co-spectral trees (which might have co-spectral complements as well). Recall that two graphs are {\it co-spectral} if the characteristic polynomials of their adjacency matrices are the same.

In this paper we count and compute with the help of an automaton generating the geodesic language in a RACG based on a tree $T$. This automaton reflects some of the path information from $T$. In general, a good deal of combinatorial information about $T$ can be extracted from its spectrum. However, we encounter two rather different behaviours: on one hand we produce (infinitely many) pairs of trees $T_1$ and $T_2$ which are non-isomorphic, co-spectral, with co-spectral complements, and the RACGs $G(T_i)$, $i=1,2$, based on them have the same geodesic growth; on the other hand we obtain (infinitely many) pairs of trees $S_1$ and $S_2$ which are non-isomorphic and co-spectral, with co-spectral complements, whose respective RACGs $G(S_i)$ have distinct geodesic growth. This shows that the spectrum of a tree alone does not determine the geodesic growth of the RACG based on that particular tree.

The following two theorems, proved in Sections \ref{s:racg} and \ref{s:racg-diff-growth}, respectively, are the main results of the paper:

\begin{Thm}\label{thm1}
There exist two families of trees $\mathcal{T}^1_n = \{ T^1_1, T^1_2, \dots \}$ and $\mathcal{T}^2_n = \{ T^2_1, T^2_2, ... \}$ on $n\geq 17$ vertices such that for all $i\geq 1$:
\begin{enumerate}
\item[(1)] $T^1_i$ and $T^2_i$ are not isomorphic, but co-spectral, with co-spectral complements, and
\item[(2)] the RACGs $G(T^1_i)$ and $G(T^2_i)$ have equal geodesic growth series.
\end{enumerate}
\end{Thm}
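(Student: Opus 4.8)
The plan is to split the two requirements. Condition~(1) is purely spectral, and I would discharge it by taking $T^1_i$ and $T^2_i$ to be the explicit trees produced by the Godsil--McKay switching/grafting construction, for which non-isomorphism, co-spectrality, and co-spectrality of the complements are already available (or follow from a short direct check of the relevant characteristic polynomials). The entire weight of the theorem then rests on condition~(2), the equality of the geodesic growth series, and this is where the automaton recognizing geodesics does the work.

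First I would make that automaton completely explicit for a tree $T$. Since the standard generators are involutions that commute exactly along the edges of $T$, a word $t_1\cdots t_n$ fails to be geodesic precisely when two equal letters are separated only by letters commuting with them. Tracking the set $C$ of currently cancellable generators, one checks that $C$ is always a clique of $T$, that it updates by the rule $C\mapsto(C\cap N(t))\cup\{t\}$, and that a letter $t$ is forbidden exactly when $t\in C$. Because a tree is triangle-free, these cliques are only the empty set, the vertices, and the edges, so the automaton has exactly $1+n+(n-1)=2n$ states, all accepting, and the geodesic growth series is the walk-generating function $\mathcal G_T(x)=\mathbf e_\emptyset^{\top}(I-xM_T)^{-1}\mathbf 1$, where $M_T$ is the $2n\times 2n$ transition matrix counting admissible letters out of each state. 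The update rule shows that the out-transitions of a state $\{u,v\}$ are governed entirely by $\deg u$, $\deg v$, and by which further vertices are adjacent to $u$ or to $v$, so $M_T$ is assembled from purely local adjacency data of $T$, and the problem reduces to comparing two such matrices.

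The crux, and what I expect to be the main obstacle, is to prove that $M_{T^1_i}$ and $M_{T^2_i}$ produce the same walk-generating function from $\mathbf e_\emptyset$. Co-spectrality of the trees cannot by itself suffice here, since the second main theorem of the paper shows that geodesic growth is strictly finer than the spectrum; so I would deliberately avoid trying to express $\mathcal G_T$ through the moments $\operatorname{tr}(A^k)$. A short computation does show that the low-order coefficients are spectral -- for instance $a_4$ depends on $T$ only through $\sum_v\deg(v)^2=\tfrac12\operatorname{tr}(A^4)+|E|$ -- but the companion theorem guarantees that such expressibility must break down at higher order, so a purely spectral route is doomed. Instead, the plan is to exploit the \emph{local} character of the Godsil--McKay construction: $T^1_i$ and $T^2_i$ differ only by replacing one grafted limb by a second limb carrying the same rooted path-counting data. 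I would analyse how attaching a limb at a fixed vertex modifies the automaton, isolate the generating-function contribution of the limb across the graft, and show that for this particular pair the two contributions coincide -- ideally by exhibiting an explicit length-preserving bijection between the geodesics of $G(T^1_i)$ and those of $G(T^2_i)$, induced by the switching on the shared part of the two trees. Verifying that the switching preserves \emph{every} automaton transition through the graft, and not merely the spectrum, is the delicate point that must single out this family from the distinct-growth family treated in the second theorem.
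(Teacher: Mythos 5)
Your overall architecture coincides with the paper's: part (1) is discharged by McKay's spectral results (Theorem \ref{thm:McKay1}), part (2) is to rest on the clique automaton --- your description of the $2n$ states and the transition rule $C \mapsto (C \cap N(t)) \cup \{t\}$ is exactly Definition \ref{def:DFA} --- and the comparison is to be localized at the grafted limb. The genuine gap is that at the step you yourself call the crux, the proposal remains a plan, and the mechanism you propose for closing it cannot work as stated. The paper's argument (Lemma \ref{equal_growth} together with Section \ref{s:equal_special}) has two ingredients that are absent from your outline. First, a precise reduction: every word of $\Gamma_j^*$ is written in syllables $w_1 u_1 w_2 \cdots u_n$ with $u_i \in (\tau\setminus\{0\})^*$ and $w_i$ over the limb alphabet, the non-geodesic words are sorted into three types, and the count of each type is shown to depend only on the geodesic series of $\tau$ together with exactly four series attached to the \emph{rooted} limb: the counting series of $Geo$, $Geo_0$, $Geo^0$ and $Geo_0^0$ (all geodesics, those starting with $0$, ending with $0$, and both). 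Second, a verification that these four series agree for McKay's pair $T_1, T_2$. The unanchored equality is free, since $T_1\cong T_2$ as abstract graphs; but the anchored equalities such as $f_{Geo_0(T_1)}(r)=f_{Geo_0(T_2)}(r)$ are genuinely nontrivial, because the graph isomorphism does \emph{not} carry the root of $T_1$ to the root of $T_2$, and the paper establishes them by explicit computation of the rational generating functions from the automaton. Only after these numerical identities are in hand is the length-preserving bijection assembled, abstractly, from the equalities of coefficients; it is not ``induced by the switching'' in any geometric sense, and nothing about it preserves transitions through the graft.

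The reason no structural argument of the kind you sketch can succeed is supplied by the paper itself: Godsil's pair $S_1,S_2$ of Theorem \ref{thm2} satisfies every qualitative property you invoke. It also consists of two rootings of one and the same abstract tree, with equal rooted characteristic-polynomial data ($\phi_{S_1}=\phi_{S_2}$ and $\phi_{S_1\setminus\{0\}}=\phi_{S_2\setminus\{0\}}$), so grafting any $\tau$ produces non-isomorphic, co-spectral trees with co-spectral complements --- i.e.\ ``a second limb carrying the same rooted path-counting data.'' Yet the anchored geodesic counts differ at length $8$ ($8919523$ versus $8919522$ in the paper's computation), and the geodesic growth series of the coalescences are distinct. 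So ``the explicit trees produced by the Godsil--McKay switching/grafting construction'' is ambiguous in exactly the fatal way: one admissible instance of your recipe is a counterexample to your own claim, and any hoped-for transition-preserving bijection through the graft would have to exist for $S_1,S_2$ by the same reasoning. What singles out McKay's pair is precisely the coincidence of the four anchored series, an invariant your proposal neither isolates as the relevant one nor verifies; identifying it and checking it (by machine, since it is strictly finer than all the spectral data in play) is the actual content of the theorem.
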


Theorem \ref{thm1} thus answers positively Question 1 in \cite[Section 8]{AntolinCiobanu}.

\begin{Thm}\label{thm2}
There exist two families of trees $\mathcal{S}^1_n = \{ S^1_1, S^1_2, \dots \}$ and $\mathcal{S}^2_n = \{ S^2_1, S^2_2, ... \}$ on $n\geq 11$ vertices such that for all $i\geq 1$:
\begin{enumerate}
\item[(1)] $S^1_i$ and $S^2_i$ are not isomorphic, but co-spectral, with co-spectral complements, and
\item[(2)] the RACGs $G(S^1_i)$ and $G(S^2_i)$ have distinct geodesic growth series.
\end{enumerate}
\end{Thm}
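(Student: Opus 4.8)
The plan is to realise the two families as graftings of a fixed pair of Godsil–McKay gadgets onto a common growing tree, and then to separate their geodesic growth using the transfer matrix of the geodesic automaton, which records strictly finer data than either the spectrum or the walk counts. First I would recall the combinatorial description of geodesics in $G(T)$: a word over $V(T)$ is a geodesic if and only if between any two consecutive occurrences of a generator $s$ there is a letter that is not a neighbour of $s$ in $T$. Tracking, after each prefix $w$, the set $B(w)$ of \emph{blocked} generators (those $s$ that have occurred and all of whose letters since the last occurrence are neighbours of $s$) yields a deterministic automaton: from a state $B$ one may append any $t\notin B$, moving to $\{t\}\cup(B\cap N(t))$. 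A short argument shows every reachable $B$ is a clique of $T$ — if $s,s'\in B$ then the later of the two sits after the last occurrence of the earlier, hence is a neighbour of it — so, as $T$ is a tree, $\lvert B\rvert\le 2$. The state set is therefore $\{\emptyset\}\cup V(T)\cup E(T)$, the automaton $\mathcal A(T)$ has $2n$ states, all accepting, and the geodesic growth series is the walk-generating function $G_T(z)=\mathbf e_\emptyset^{\top}(I-zM_T)^{-1}\mathbf 1$ of the transfer matrix $M_T$. Thus part (2) reduces to producing a single coefficient $a_k$ of $G_T(z)$ on which $S^1_i$ and $S^2_i$ disagree.

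For the families themselves I would take the cospectral rooted trees of Godsil and McKay and graft them at a fixed vertex onto a common growing tree (a path or a star), obtaining $S^1_n$ and $S^2_n$ on $n\ge 11$ vertices; the grown part is identical in both, and only a bounded gadget differs. Cospectrality and cospectrality of the complements are then inherited from the defining properties of the gadget and are quoted from Godsil and McKay: the coalescence formula makes $\phi_{S^1_n}=\phi_{S^2_n}$ depend on the gadget only through its characteristic polynomial and that of its root-deleted subtree, while the complement identity
\[
\phi_{\overline G}(x)=(-1)^n\,\phi_G(-x-1)\bigl(1+\Gamma_G(-x-1)\bigr),\qquad \Gamma_G(x)=\mathbf 1^{\top}(xI-A)^{-1}\mathbf 1,
\]
reduces $\phi_{\overline{S^1_n}}=\phi_{\overline{S^2_n}}$ to matching walk data of the gadget. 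Non-isomorphism I would settle once and for all by a single local invariant surviving inside the gadget (e.g.\ a differing count of a small subtree).

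The crux is part (2), and it is also the main obstacle. Note first that cospectrality fixes every closed-walk count $\operatorname{tr}(A^k)$, and, by the displayed identity, cospectrality of the complements fixes the entire walk sequence $w_k=\mathbf 1^{\top}A^k\mathbf 1$; hence neither the spectrum nor the total walk counts can distinguish the two trees, and the separating quantity must be strictly finer. This is exactly the data $M_T$ carries: its transitions out of a singleton $\{v\}$ and out of an edge $\{u,v\}$ are weighted by the local degrees ($\deg v$ of them lead to edges, and $\deg u+\deg v-2$ of them join neighbouring edges), so the coefficients of $G_T(z)$ are degree-weighted walk counts rather than plain ones. I would therefore read off the first coefficients $a_k$ from $\mathcal A(T)$ as explicit symmetric functions of local tree statistics; the opening values depend only on $n$ (indeed $a_0=1$, $a_1=n$, $a_2=n(n-1)$, and $a_3=(n-1)(n-2)(n+1)$), which already confirms that the first difference can only appear later, at the level of a degree-weighted path or small-subtree count. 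Since $S^1_n$ and $S^2_n$ differ only inside the bounded gadget, this weighted count, and with it the first differing $a_k$, is computed once on the gadget and yields the same nonzero difference for every $n\ge 11$. The delicate point is precisely to pin down a degree-weighted invariant that is invisible to both $\phi_G$ and to $(w_k)_{k\ge 0}$ yet provably differs on the gadget, and to check that the grown part contributes identically to $G_{S^1_n}$ and $G_{S^2_n}$, so that the separation is uniform in $n$.
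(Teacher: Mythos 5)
Your setup is sound and in fact coincides with the paper's: the blocked-set automaton you describe is exactly the paper's geodesic automaton (states $\{\emptyset\}\cup V\cup E$, transitions $B\mapsto\{t\}\cup(B\cap N(t))$), the families are built by the same coalescence of Godsil's rooted gadgets with a growing common tree, and cospectrality plus cospectrality of complements are obtained from the same McKay/Godsil identities. Your small-coefficient computations ($a_3=(n-1)(n-2)(n+1)$, etc.) are also correct. The problem is that part (2), which is the entire content of the theorem, is never actually proved: you reduce it to finding "a degree-weighted invariant that is invisible to both $\phi_G$ and $(w_k)$ yet provably differs on the gadget," and then stop, explicitly flagging this as the delicate point. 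No differing coefficient is exhibited, and no argument is given that one exists. This is a genuine gap, not a routine verification: the gadgets $S_1$ and $S_2$ are isomorphic as \emph{unrooted} trees, so the RACGs $G(S_1)$ and $G(S_2)$ are isomorphic and their full geodesic growth series are identical; every "computed once on the gadget" unrooted quantity, degree-weighted or not, therefore agrees. The separating data must be rooted, and one must additionally show that grafting transfers the rooted discrepancy into the full growth series of $\tau\cdot S_i$ without cancellation.

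The paper closes this gap in two steps that have no counterpart in your proposal. First, a machine computation (the code Monty) shows that the number of geodesics of length $8$ \emph{starting with the root} $0$ is $8919523$ for $S_1$ but $8919522$ for $S_2$, while the rooted counts agree for all lengths $r\le 7$. Second, a syllable-decomposition argument (parallel to the equal-growth lemma for McKay's trees) shows that when one counts non-geodesic words of length $10$ in $\tau\cdot S_1$ versus $\tau\cdot S_2$, every contribution cancels except the one coming from syllable patterns $w_1u_1w_2$ with $|w_1|=8$ or $|w_2|=8$, which contributes $2\,\deg_\tau(0)\,f_{Geo_0(S_i)}(8)$; hence the two geodesic growth series differ in the coefficient of $t^{10}$. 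Note why some such computation is unavoidable: the identical outline applied to McKay's gadgets $T_1,T_2$ (also cospectral, with cospectral complements, isomorphic as unrooted trees) yields \emph{equal} geodesic growth --- that is the paper's Theorem 1.1. So no general argument of the form "the transfer matrix carries strictly finer data than the spectrum" can decide which way a given pair falls; whether the rooted counts differ is a property of the specific gadget and must be established by an explicit (in practice, computer-assisted) count, together with the propagation argument showing the difference survives coalescence uniformly in $\tau$.
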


We note that McKay showed in \cite{McKay} that the trees $T^1_i$ and $T^2_i$ in Theorem~\ref{thm1} are simultaneously co-spectral, have co-spectral complements and co-spectral line graphs. On the contrary, the trees $S^1_i$ and $S^2_i$ from Theorem~\ref{thm2} might have line graphs with distinct spectra. 
Moreover, by a result of McKay \cite{McKay}, in both Theorem~\ref{thm1} and Theorem~\ref{thm2} the cardinality of each family $\mathcal{T}^i_n$ and $\mathcal{S}^i_n$ tends asymptotically to the cardinality of the set $\Upsilon_n$ of all trees on $n$ vertices:
\begin{equation}
\frac{\mathrm{card}\, \mathcal{T}^i_n}{\mathrm{card}\, \Upsilon_n} \rightarrow 1 \mbox{\,\,\, and \,\,\,} \frac{\mathrm{card}\, \mathcal{S}^i_n}{\mathrm{card}\, \Upsilon_n} \rightarrow 1, \mbox{\,\,\, as \,\,\,} n\rightarrow \infty, \ i=1, 2.
\end{equation}
The two theorems above, together with McKay's results, lead us to the following conjecture:

\begin{Conj}
If two trees are simultaneously co-spectral, have co-spectral complements and co-spectral line graphs, then they have the same geodesic growth.
\end{Conj}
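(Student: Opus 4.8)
The plan is to reduce the geodesic growth of $G(T)$ to a transfer-matrix computation on an automaton whose states carry exactly the structure that the three spectral hypotheses control, and then to argue that those hypotheses pin down every walk-count entering the growth series. First I would build the geodesic automaton from the standard characterization of geodesics in a RACG: a word $s_1\cdots s_k$ is geodesic if and only if no two equal letters can be brought together by commutations, i.e.\ for every pair of positions $i<j$ with $s_i=s_j$ there is some $\ell$ between them with $s_\ell$ not adjacent to $s_i$ in $T$. Reading a word left to right, the only information needed to decide whether the next letter preserves geodesicity is the set $E$ of \emph{exposed} generators, those whose last occurrence is followed only by commuting letters. Each such $E$ is a clique of the commutation graph $T$; since $T$ is a tree, hence triangle-free, $|E|\le 2$. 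Thus the non-initial states of the automaton are exactly the vertices and edges of $T$, the transition rule is $E\mapsto\{t\}\cup(E\cap N(t))$, and the geodesic growth series is $\Omega_T(x)=1+x\,\mathbf 1_V^{T}(I-xM)^{-1}\mathbf 1$, where $M$ is the $(2n-1)\times(2n-1)$ transition matrix, $\mathbf 1_V$ is the indicator of the vertex-states, and $\mathbf 1$ is all-ones.

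The second step is to read off the block structure of $M$ and to observe that it is assembled precisely from the data appearing in the hypotheses. Evaluating the transition rule case by case and using triangle-freeness, one finds
\[
M=\begin{pmatrix}\bar A & B\\ J-B^{T}A & A(L)\end{pmatrix},
\]
where $\bar A=J-I-A$ is the adjacency matrix of the complement $\overline T$ (the vertex-to-vertex block), $B$ is the vertex–edge incidence matrix (vertex-to-edge block), $A(L)$ is the adjacency matrix of the line graph $L(T)$ (edge-to-edge block), and the edge-to-vertex block equals $J-B^{T}A$. The three spectral hypotheses are then exactly the assertions that the two trees share the spectra of $A$, of the $VV$-block $\bar A$, and of the $EE$-block $A(L)$. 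Since $BB^{T}=D+A$ and $B^{T}B=A(L)+2I$, the line-graph spectrum is equivalent to the signless-Laplacian spectrum, hence (for the bipartite tree) to the Laplacian spectrum, so the hypotheses also fix $\operatorname{tr}(L^{k})$ for all $k$.

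The third step is to expand $\Omega_T(x)=1+x\sum_{j\ge0}x^{j}\,\mathbf 1_V^{T}M^{j}\mathbf 1$ and to try to express each coefficient $\mathbf 1_V^{T}M^{j}\mathbf 1$ in terms of quantities fixed by the hypotheses. Two observations make this plausible. First, by the classical identity relating $\phi_{\overline G}$ to the walk generating function, co-spectrality together with co-spectrality of complements is equivalent to equality of all closed-walk counts $\operatorname{tr}(A^{k})$ \emph{and} of all walk counts $W_k=\mathbf 1^{T}A^{k}\mathbf 1$, and the same equalities then transfer to $\bar A$; this controls the purely vertex–vertex contributions. Second, the boundary vectors are not arbitrary but tied to the incidence structure through $B\mathbf 1_m=d$ (the degree vector) and $B^{T}\mathbf 1_n=2\,\mathbf 1_m$, so the vertex–edge couplings collapse onto degree data that the Laplacian spectrum partly detects. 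The aim would be to show, block by block, that every mixed product of $\bar A$, $B$, $A(L)$ and $J-B^{T}A$ contracted against the all-ones vectors is a universal polynomial in the shared invariants $\operatorname{tr}(A^{k})$, $\operatorname{tr}(\bar A^{k})$, $\operatorname{tr}(A(L)^{k})$, $W_k$ and the degree moments.

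The hard part will be exactly the coupling between the vertex world and the edge world carried by the incidence matrix $B$. The three hypotheses fix the spectra of the \emph{diagonal-type} pieces $A$, $\bar A$ and $A(L)$, and hence the operators $BB^{T}$ and $B^{T}B$, but they do not obviously determine the relative position of $B$ with respect to $A$ (the term $B^{T}A$) nor the two-point functions $\mathbf 1_n^{T}(\cdots B\cdots)\mathbf 1_m$ that arise once a walk crosses between vertex- and edge-states. Theorem~\ref{thm2} shows that dropping the line-graph hypothesis already destroys equality of growth, so any proof must use line-graph co-spectrality in an essential way to constrain these cross terms; conversely, since line-graph co-spectrality alone need not fix the line-graph walk counts $\mathbf 1^{T}A(L)^{k}\mathbf 1$, it is not even clear that the three hypotheses suffice. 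Establishing that all mixed $B$-terms collapse onto the listed invariants — or producing a counterexample — is the crux, and is precisely why the statement is left as a conjecture.
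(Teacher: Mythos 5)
The statement you set out to prove is left as a \emph{conjecture} in the paper: the authors offer no proof at all, only evidence in both directions --- the families of Theorem \ref{thm1} satisfy all three spectral hypotheses and have equal geodesic growth, while the families of Theorem \ref{thm2} fail only the line-graph hypothesis and have distinct growth. So there is no proof of the paper's to compare yours against, and, as you yourself concede in your final paragraph, your proposal is not a proof either. What you do have is a correct and genuinely useful reformulation. Your automaton coincides with the paper's Definition \ref{def:DFA} (states $\{\emptyset\}\cup V\cup E$, transition $\sigma\mapsto(\Star(s)\cap\sigma)\cup\{s\}$), your growth formula $1+x\,\mathbf 1_V^{T}(I-xM)^{-1}\mathbf 1$ is right, and your block decomposition of $M$ checks out: the vertex-to-vertex block is $\bar A=J-I-A$, the vertex-to-edge block is the incidence matrix $B$, the edge-to-edge block is $A(L)$ (here triangle-freeness is used, since a letter read at an edge-state can be adjacent to at most one of its two endpoints), and the edge-to-vertex block is $J-B^{T}A$. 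This makes transparent why the three hypotheses of the conjecture are plausibly the ``right'' ones: they fix the spectra of both diagonal blocks and, via $B^{T}B=A(L)+2I$ and bipartiteness, the Laplacian spectrum.

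The genuine gap is exactly the one you name, and it is not a technicality --- it is the entire content of the conjecture. The coefficients of the growth series are contractions $\mathbf 1_V^{T}M^{j}\mathbf 1$, which expand into sums of words in the non-commutative algebra generated by $\bar A$, $B$, $J-B^{T}A$ and $A(L)$, evaluated against all-ones vectors. Co-spectrality is a statement about single matrices: it controls $\operatorname{tr}$ of powers of each block separately, and (with co-spectral complements, by the Johnson--Newman equivalence you invoke) the walk counts $\mathbf 1^{T}\bar A^{k}\mathbf 1$, but it says nothing about the relative position of the eigenbases of $A$ and of $BB^{T}$, i.e.\ about mixed moments such as $\mathbf 1^{T}\bar A^{i}B\,A(L)^{j}(J-B^{T}A)\cdots\mathbf 1$, which is precisely what appears once a walk crosses between the vertex world and the edge world. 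Two pairs of matrices can have all corresponding blocks co-spectral while such mixed contractions differ, so no argument that uses only the block spectra can succeed; one would need a joint invariant (e.g.\ a single orthogonal similarity, fixing $\mathbf 1$, conjugating one pair $(A,B)$ to the other) or a combinatorial mechanism in the spirit of the paper's Lemma \ref{equal_growth}, which avoids transfer matrices altogether and instead builds length-preserving bijections between sets of non-geodesic words. Your reduction is a sensible scaffold for attacking the problem, but the conjecture is exactly as open after your argument as before it.
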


Section \ref{s:equal_special} contains details about the computation of several kinds of geodesics, and their numbers, for the trees introduced in Section \ref{s:racg}.

The authors have created a Python code $\mathrm{Monty}$\footnote{this is not the given name of the code, which would be obviously too posh for such a petty thing, but a reference name, which is seemingly good for any Python code.} \cite{Pyth}, which performs the computations needed for the results in this paper by using either SAGE standard routines for symbolic computation of determinants and rational expressions of growth series, or the Berlekamp-Massey algorithm for restoring the rational expression for a growth series from a sufficient number of its coefficients, as a more efficient approach. Our Python code constructs finite-state automata that accept the geodesic languages in RACGs based on triangle-free graphs (in this case, trees), and then proceeds to determine the respective growth series.

\section{Definitions and notation} \label{s:def}

Let $S$ be a finite set and $S^*$ the free monoid on $S$. We identify $S^*$ with the set of words over $S$, that is, finite sequences of elements of $S$. We use 
$| . |$ to denote word length. 

Let $G=\gen{S}$ be a group generated by $S$. For an element $g$ of $G$, denote by $|g|_S$ the word length of $g$ with respect to $S$. Given $w\in S^*$, we denote by $\overline{w}$ the image of $w$ in $G$ under the natural  projection $S^* \rightarrow G$. 

\begin{Def}
A word $w$ over an alphabet $S$ is {\it geodesic} in $G = \langle S \rangle$ if $|w|=|\overline{w}|_S$. The set of geodesics in $G$ with respect to $S$ will be denoted by $Geo(G)$ or $Geo(S)$.
\end{Def}

Let $\ga=\ga(G,S)$ be a simple (no loops, no multiple edges) graph with vertex set $V(\Gamma)=S$ (or simply $V$) and edge set $E(\Gamma)$ (or simply $E$), where $E \subseteq V \times V$.
The RACG based on $\Gamma$ is given by the presentation 
$$\langle s \in S \mid s^2=1 \ \forall s\in S, \ \textrm{and}\  (ss')^2=1, \, \forall \{s, s'\} \in E \rangle.$$ It is easy to see that for any two involutions $s$ and $s'$ the relation $(ss')^2=1$ implies $ss'=s's$. This leads to another possible presentation for RACGs: $\langle s \in S \mid s^2=1, ss'=s's, \, \forall \{s, s'\} \in E \rangle.$

In the present paper we use the same letters for the vertices of $\ga$ and the corresponding generators of the group $G(\ga)$.

The {\it star} of a vertex $v\in V$ in $\Gamma$, denoted by $\Star_{\Gamma}(v)$, or $\Star(v)$ if the ambient graph is clear in the given context, is the set of vertices in $\Gamma$ that are adjacent to $v$. That is, 
 $$\Star_{\Gamma}(v)=\{w \in V \mid \{v, w\} \in E \}.$$

 We now describe a finite deterministic automaton that recognises geodesics in RACGs (see \cite{hu} for definitions of languages and automata). We define such automata in the standard way, as quintuples $(Q,\Sigma, \delta, q_0, F)$, where $Q$ is the finite set of \textit{states}, $\Sigma$ the input alphabet, $\delta$ the \textit{transition function}, $q_0$ the \textit{initial} or \textit{start} state, and $F$ the set of \textit{final} or \textit{accepting} states. The following definition is a simplified version of Proposition 4.1 in \cite{AntolinCiobanu}.

\begin{Def}\label{def:DFA} Let $\Gamma=(V,E)$ be a tree. The deterministic finite state automaton recognising the geodesics in $G(\Gamma)$ is $A=(Q, S, \delta, \{\emptyset\}, F)$, where the set of states is $Q=\{\emptyset\} \cup V \cup E \cup \{ \rho\}$, $\rho$ is the unique ``fail'' state, and $\{\emptyset\}$ is the start state. The input alphabet is $S=V$, the set of accept states $F$ is $\emptyset \cup V \cup E$, and the transition function $\transition:Q \times S \rightarrow Q$ is given by
\begin{enumerate}
\item $\transition(\sigma, s)=(\Star(s) \cap \sigma) \cup \{s\}$, for $s \notin \sigma$; 
\item $\transition(\sigma,s)=\rho$, otherwise.
\end{enumerate} 
\end{Def}
 
\begin{Defs}
Any set $L$ of words over an alphabet $\Sigma$ gives rise to a
 \textit{strict growth function} $f_L:\mathbb{N} \rightarrow \mathbb{N}$, 
defined by  $$f_L(n) := |\{W \in L \mid |W| = n\}|.$$
\end{Defs}

\begin{Defs} \label{defgrowth}
Let $G$ be a group generated by $S$. Then we define the following:
\begin{enumerate}
\item The \textit{geodesic growth function} $f_{Geo(G)}:\mathbb{N}\rightarrow \mathbb{N}$ is given by 
$$f_{Geo(G)}(r):= f_{Geo(G,S)}(r)=|\{w\in S^* \mid |w|=|\overline{w}|_S=r\}|,$$
and the \textit{geodesic growth series} of $G$ equals $$\mathcal{G}_{(G,S)}(t)=\sum_{r=0}^\infty f_{Geo(G)}(r)\, t^r.$$ 

\item The \textit{spherical (standard) growth function} $\sigma_{(G,S)}:\mathbb{N}\rightarrow \mathbb{N}$ is given by 
$$\sigma_{(G,S)}(r)=|\{g\in G \mid |g|_S=r\}|,$$
and the \textit{spherical (standard) growth series} of $G$ equals $$\Sigma_{G}(t):=\Sigma_{(G,S)}(t)=\sum_{r=0}^\infty \sigma_{(G,S)}(r)\, t^r.$$ 
\end{enumerate}
\end{Defs}
 
We recall that the {\it $f$-polynomial} of a graph $\Gamma$ is the generating function for the number of cliques (that is, complete subgraphs) of size $i$ in $\Gamma$: $f(t)=f_0+f_1t+f_2t^2+ \dots $, where $f_i$ is the number of $i$-cliques in $\Gamma$. We consider the empty set to be a clique on zero vertices, and therefore $f_0=1$.
We remark that the spherical (standard) growth function of a RACG is determined by the $f$-polynomial of its defining graph \cite[Proposition 17.4.2]{Davis}. We want to contrast this to the fact that the geodesic growth function is not uniquely determined by the $f$-polynomial, or even by the spectrum of the defining graph, as the following sections show.


\section{RACGs with equal geodesic growth}\label{s:racg}

In this section we prove Theorem \ref{thm1} by giving an explicit construction of the families of trees $\mathcal{T}^i_n$, $i=1,2$. 

\begin{Def}
We define the \textit{coalescence} $\tau \cdot \sigma$ of two rooted trees $\tau$ and $\sigma$ to be the tree which results from merging $\tau$ and $\sigma$ at their roots. The tree $\tau \cdot \sigma$ has as vertex set the union of the vertex sets of $\tau$ and $\sigma$, and as root the identification of the roots of $\tau$ and $\sigma$, as shown in Fig.~\ref{fig:trees0}. 
\end{Def}

\begin{figure}[ht]
\begin{center}

\begin{subfloat}{}
\begin{tikzpicture}
\SetVertexNoLabel

\tikzset{VertexStyle/.style = {draw,circle}}
\Vertex{0} 
\SetGraphUnit{2}
\tikzset{VertexStyle/.style = {draw,circle,fill,label = 270:$\tau$}}
\EA(0){1} 
\tikzset{VertexStyle/.style = {draw,circle}}
\SetGraphUnit{1}
\NOEA(1){2}
\NOWE(2){3} \NOEA(2){4}

\Edge(0)(1) \Edge(1)(2)
\Edge(2)(3) \Edge(2)(4)

\SetGraphUnit{3}
\SO(0){A}
\SetGraphUnit{2}
\tikzset{VertexStyle/.style = {draw,circle,fill,label = 90:$\sigma$}}
\EA(A){B}
\tikzset{VertexStyle/.style = {draw,circle}}
\SetGraphUnit{1}
\EA(B){C} \SO(B){D}

\Edge(A)(B) \Edge(B)(C) \Edge(B)(D)

\end{tikzpicture}
\end{subfloat}
~
\begin{subfloat}{}
\begin{tikzpicture}
\SetVertexNoLabel

\tikzset{VertexStyle/.style = {draw,circle}}
\Vertex{0} 
\SetGraphUnit{2}
\tikzset{VertexStyle/.style = {draw,circle,fill,label = 225:$\tau \cdot \sigma$}}
\EA(0){1} 
\tikzset{VertexStyle/.style = {draw,circle}}
\SetGraphUnit{1}
\NOEA(1){2}
\NOWE(2){3} \NOEA(2){4}

\Edge(0)(1) \Edge(1)(2)
\Edge(2)(3) \Edge(2)(4)

\SetGraphUnit{2}
\SO(1){A}
\SetGraphUnit{0.75}
\NOWE(1){B} 
\SetGraphUnit{1}
\EA(1){C}

\Edge(1)(A) \Edge(1)(B) \Edge(1)(C)
\end{tikzpicture}
\end{subfloat}
\end{center}
\caption{Trees $\tau$ and $\sigma$ with marked roots (on the left) and their coalescence $\tau \cdot \sigma$ (on the right)}\label{fig:trees0}
\end{figure}
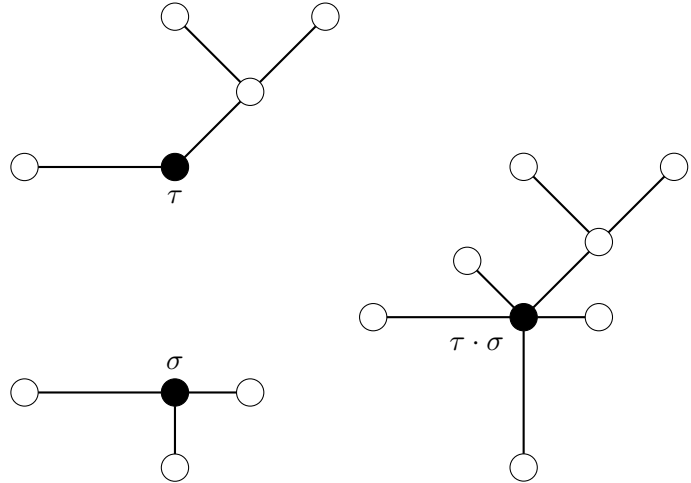

Consider the trees $T_1$ and $T_2$, both rooted at $0$, first described by McKay in \cite{McKay}, as shown in Fig.~\ref{fig:trees1}. In \cite{McKay} the following fact is proved:

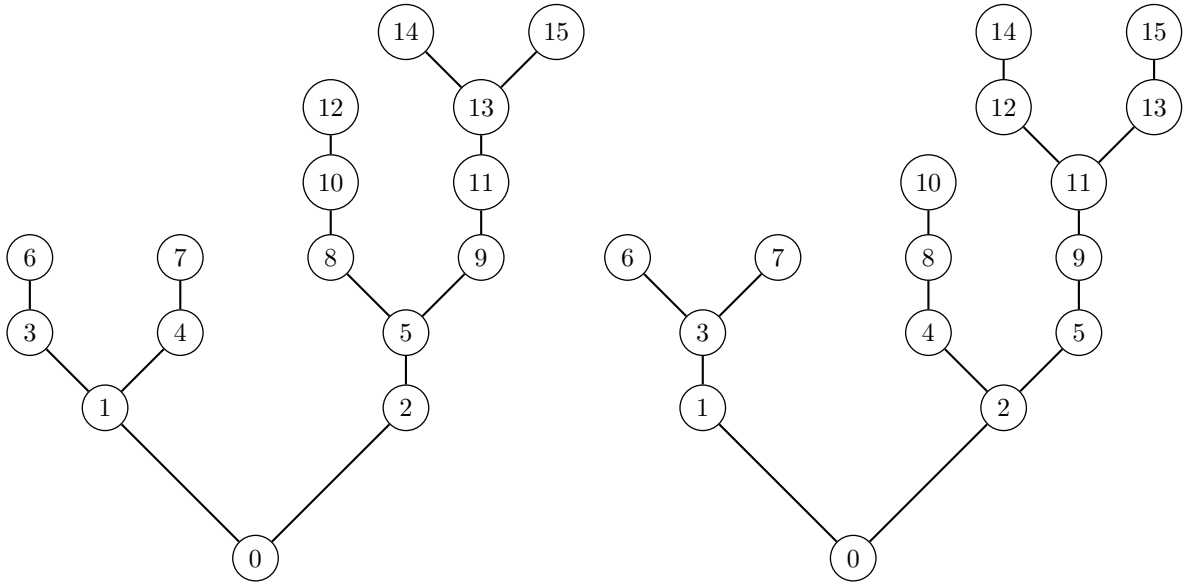
\begin{figure}[ht]
\begin{center}

\begin{subfloat}{}
\begin{tikzpicture}[every node/.style={scale=0.9}]
\GraphInit[vstyle=Normal]
\SetGraphUnit{2}
\Vertex{0}
\NOWE(0){1} \NOEA(0){2} 
\Edge(0)(1) \Edge(0)(2)
\SetGraphUnit{1}
\NOWE(1){3} \NOEA(1){4} \NO(2){5}
\Edge(1)(3) \Edge(1)(4) \Edge(2)(5)
\NO(3){6} \NO(4){7} \NOWE(5){8} \NOEA(5){9}
\Edge(3)(6) \Edge(4)(7) \Edge(5)(8) \Edge(5)(9)
\NO(8){10} \NO(9){11} \NO(10){12} \NO(11){13}
\Edge(8)(10) \Edge(9)(11) \Edge(10)(12) \Edge(11)(13)
\NOWE(13){14} \NOEA(13){15}
\Edge(13)(14) \Edge(13)(15)
\end{tikzpicture}
\end{subfloat}
~
\begin{subfloat}{}
\begin{tikzpicture}[every node/.style={scale=0.9}]
\GraphInit[vstyle=Normal]
\SetGraphUnit{2}
\Vertex{0}
\NOWE(0){1} \NOEA(0){2} 
\Edge(0)(1) \Edge(0)(2)
\SetGraphUnit{1}
\NO(1){3} \NOWE(2){4} \NOEA(2){5}
\Edge(1)(3) \Edge(2)(4) \Edge(2)(5)
\NOWE(3){6} \NOEA(3){7} 
\Edge(3)(6) \Edge(3)(7) 
\NO(4){8} \NO(5){9} \NO(8){10} \NO(9){11}
\Edge(4)(8) \Edge(5)(9) \Edge(8)(10) \Edge(9)(11)
\NOWE(11){12} \NOEA(11){13}
\Edge(11)(12) \Edge(11)(13)
\NO(12){14} \NO(13){15}
\Edge(12)(14) \Edge(13)(15)
\end{tikzpicture}
\end{subfloat}
\end{center}
\caption{McKay's rooted trees: $T_1$ on the left, and $T_2$ on the right}\label{fig:trees1}
\end{figure}

\begin{Thm}\label{thm:McKay1}
Let $T$ be a rooted tree with at least two vertices and with root labelled $0$. Then the trees $\Gamma_i = T\cdot T_i$, $i=1,2$, are not isomorphic, but are co-spectral. Also, their complements $\overline{\Gamma_i}$ and line graphs $L(\Gamma_i)$, $L(\overline{\Gamma_i})$, $\overline{L(\Gamma_i)}$, $i=1,2$, are respectively co-spectral. 
\end{Thm}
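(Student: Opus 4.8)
The plan is to reduce every co-spectrality assertion to Schwenk's coalescence (reduction) formula for characteristic polynomials together with a single finite verification on the $16$-vertex trees $T_1,T_2$ of Fig.~\ref{fig:trees1}. Writing $\phi(G)$ for the characteristic polynomial of the adjacency matrix of $G$ and $G\setminus v$ for vertex deletion, Schwenk's formula for the coalescence of rooted graphs $(G,u)$ and $(H,v)$ reads
$$\phi(G\cdot H)=\phi(G)\,\phi(H\setminus v)+\phi(G\setminus u)\,\phi(H)-x\,\phi(G\setminus u)\,\phi(H\setminus v),$$
obtained by expanding $\det(xI-A)$ along the row and column of the identified root. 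Taking $(H,v)=(T_i,0)$ shows that $\phi(T\cdot T_i)$ depends on $T_i$ only through the pair $\bigl(\phi(T_i),\,\phi(T_i\setminus 0)\bigr)$. So the first goal is to check that $T_1,T_2$ form a \emph{co-spectral rooted pair}, i.e. $\phi(T_1)=\phi(T_2)$ and $\phi(T_1\setminus 0)=\phi(T_2\setminus 0)$; this is a mechanical computation on the two explicit trees. Co-spectrality of $\Gamma_1=T\cdot T_1$ and $\Gamma_2=T\cdot T_2$ for every rooted $T$ then follows at once.

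For the complements I would pass to the generalized characteristic polynomial $\Phi(G;x,t)=\det\bigl(xI-A(G)-t\,J\bigr)$, since two graphs on the same number of vertices are co-spectral with co-spectral complements precisely when their generalized characteristic polynomials coincide, equivalently when $\phi(G)=\phi(H)$ and all total walk counts $\mathbf{1}^{\top}A^{k}\mathbf{1}$ agree. Using the matrix-determinant lemma one has
$$\Phi(G;x,t)=\phi(G)\bigl(1-t\,\mathbf{1}^{\top}(xI-A)^{-1}\mathbf{1}\bigr),$$
so $\Phi$ is governed by $\phi(G)$ and the total walk generating function. The plan is to establish a rooted refinement of Schwenk's formula that, beyond $\phi(\cdot)$ and $\phi(\cdot\setminus 0)$, also tracks the root-anchored walk generating functions of the factors (walks from the root to all vertices, and root-to-root), decomposing crossing walks through the identified root. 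This expresses $\Phi(\Gamma_i;x,t)$ through finitely many rooted invariants of $T_i$, and one then strengthens the finite check so that $T_1,T_2$ match on all of them, yielding $\overline{\Gamma_1},\overline{\Gamma_2}$ co-spectral.

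For the line graphs I would use the reduction to the signless Laplacian: since $\Gamma_1,\Gamma_2$ share vertex and edge counts, $L(\Gamma_1)$ and $L(\Gamma_2)$ are co-spectral iff the signless Laplacians $Q(\Gamma_i)=D+A$ are co-spectral, and because trees are bipartite, $Q$ is similar to the Laplacian $D-A$, so this is Laplacian co-spectrality. The coalescence strategy transfers verbatim with the Laplacian in place of the adjacency matrix (an analogous reduction formula holds along the shared root), reducing co-spectrality of the line graphs to $T_1,T_2$ being a \emph{Laplacian} co-spectral rooted pair; the statements for $L(\overline{\Gamma_i})$ and $\overline{L(\Gamma_i)}$ are then obtained by combining this with the complement computation above. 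For non-isomorphism I would recover the rooted factors from the unrooted coalescence by a canonical invariant at the identification vertex (its degree and the branch hanging off it, recorded by $\phi(T_i\setminus 0)$ and the local neighbourhood), so that $\Gamma_1\cong\Gamma_2$ would force $T_1\cong T_2$ as rooted trees, contradicting that $T_1\not\cong T_2$.

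I expect the main obstacle to be the complement and line-graph assertions rather than bare co-spectrality: unlike $\phi(\Gamma_i)$, these are \emph{not} controlled by Schwenk's formula alone, and the whole point of McKay's particular choice of $T_1,T_2$ is that one pair of trees simultaneously matches the adjacency, generalized, and Laplacian rooted invariants needed to make all four co-spectralities coalescence-stable. Setting up the rooted generalized-characteristic-polynomial reduction and confirming that $T_1,T_2$ meet all the strengthened conditions at once is the technical heart; the non-isomorphism is conceptually easy but also requires care, since coalescing the same $T$ onto both factors could a priori erase a rooted distinction.
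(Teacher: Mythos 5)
First, a point of reference: the paper does not prove this statement at all --- it is quoted from McKay \cite{McKay}, and the only coalescence argument the paper actually carries out is for the Godsil pair (Theorem \ref{thm:Godsil}), where precisely your first step appears: Schwenk's reduction formula (cited there as \cite[Lemma 2.2 (i)]{McKay}), the observation that the two rooted trees are isomorphic as unrooted graphs, and a citation of \cite[Theorem 3.1 (i)]{McKay} for the complements. So your plan is best read as a reconstruction of McKay's own method. Your adjacency part is correct and matches that template: Schwenk's formula reduces co-spectrality of $T\cdot T_1$ and $T\cdot T_2$, for every rooted $T$, to the finite checks $\phi(T_1)=\phi(T_2)$ (automatic, since $T_1\cong T_2$ as unrooted trees) and $\phi(T_1\setminus 0)=\phi(T_2\setminus 0)$, and the generalized polynomial $\Phi(G;x,t)$ (Johnson--Newman) is the right tool for the complements.

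There are, however, two genuine gaps. (1) Non-isomorphism: your recovery of the identified root fails. Both $T_1$ and $T_2$ have root of degree $2$, so the coalescence vertex has degree $\deg_T(0)+2$ in both $\Gamma_1$ and $\Gamma_2$; worse, the invariant you propose to distinguish the factors, $\phi(T_i\setminus 0)$, is exactly the quantity you arranged to be \emph{equal} in your first step, so it cannot do this job; and an isomorphism $\Gamma_1\to\Gamma_2$ has no reason to carry the coalescence vertex of $\Gamma_1$ to that of $\Gamma_2$ in the first place. The working mechanism is limb counting --- exactly what the paper does for the Godsil pair (the limb $\sigma$ of Fig.~\ref{fig:trees3}) and what McKay does: exhibit a rooted tree that occurs as a limb of $\Gamma_1$ strictly more often than of $\Gamma_2$, for every choice of $T$. (2) The assertions for $\overline{L(\Gamma_i)}$ and $L(\overline{\Gamma_i})$ do not follow by ``combining'' line-graph co-spectrality with complement co-spectrality: co-spectrality is not preserved under complementation, and, for instance, co-spectrality of $\overline{L(\Gamma_1)}$ and $\overline{L(\Gamma_2)}$ requires, beyond co-spectral signless Laplacians $Q(\Gamma_i)$, equality of the degree-weighted walk sums $d^{\top}Q^{k}d$ (these govern the walk counts of $L(\Gamma_i)$, since $A(L(G))=B^{\top}B-2I$ and $B\mathbf{1}=d$), while $L(\overline{\Gamma_i})$ needs the sums $\mathbf{1}^{\top}Q^{k}\mathbf{1}$; each is a new family of rooted invariants requiring its own coalescence formula and its own finite check on $T_1,T_2$. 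A smaller repair: the Laplacian analogue of Schwenk's formula is not quite ``verbatim'' --- the deleted-root term must be the principal minor of $xI-L(\Gamma)$ at the root, not $\det\bigl(xI-L(\Gamma\setminus 0)\bigr)$, because deleting the root changes the degrees of its neighbours. All of this is repairable inside your framework (these are exactly the lemmas McKay proves), but as written the proposal does not close these steps.
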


Now let $G_1:=G(T_1)$ and $G_2:=G(T_2)$ be the RACGs associated to the trees $T_1$ and $T_2$ defined above.  First note that since the trees $T_1$ and $T_2$ are isomorphic as graphs, the groups $G_1$ and $G_2$ are isomorphic. However, $T_1$ and $T_2$ are not isomorphic as rooted trees. 

Let $\tau$ be a tree with $n$ vertices. Fix a labelling $\{0, \dots, n-1\}$ of the vertices of $\tau$, and suppose that $0$ represents $\tau$'s root. Define $\Gamma_1=\tau \cdot T_1$ and $\Gamma_2=\tau \cdot T_2$ to be the trees obtained as the coalescence of $\tau$ with $T_i$ at vertex $0$, and let $G(\Gamma_1)$ and $G(\Gamma_2)$ be the RACGs based on $\Gamma_1$ and $\Gamma_2$, respectively. Since $\Gamma_1$ and $\Gamma_2$ are non-isomorphic, $G(\Gamma_1)$ and $G(\Gamma_2)$ are non-isomorphic, as well. 
  
The following lemma is the key ingredient of Theorem \ref{thm1}.

\begin{Lem}\label{equal_growth}
The groups $G(\Gamma_1)$ and $G(\Gamma_2)$ have the same geodesic growth series.
\end{Lem}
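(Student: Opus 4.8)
The plan is to express the geodesic growth series as a walk count in the automaton of Definition~\ref{def:DFA} and then to compare the two automata at the level of their transfer matrices. Order the non-fail states as $\emptyset$, then the vertices $V$, then the edges $E$, and let $M=M(\Gamma)$ be the matrix whose $(q,q')$ entry is the number of letters $s\in S$ with $\transition(q,s)=q'$. Since $F$ consists of all non-fail states and the empty word is accepted, $f_{Geo(G(\Gamma))}(r)$ equals the number of length-$r$ walks in $M$ issuing from $\emptyset$, so by Definition~\ref{defgrowth} the geodesic growth series is
\[ \mathcal{G}_{G(\Gamma)}(t)=\mathbf{e}_\emptyset^{\,T}\,(I-tM)^{-1}\,\mathbf{1}, \]
where $\mathbf{e}_\emptyset$ is the indicator of the start state and $\mathbf{1}$ the all-ones vector over the non-fail states. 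It therefore suffices to prove $\mathbf{e}_\emptyset^{\,T}M_1^{\,r}\mathbf{1}=\mathbf{e}_\emptyset^{\,T}M_2^{\,r}\mathbf{1}$ for all $r$, where $M_i=M(\Gamma_i)$.

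Next I would read off the block structure of $M$ directly from the transition rules. From $\emptyset$ every letter $s$ sends $\emptyset\mapsto\{s\}$; from a vertex $\{w\}$ a letter $s\neq w$ sends $\{w\}\mapsto\{w,s\}$ when $s\in\Star(w)$ and $\{w\}\mapsto\{s\}$ otherwise; and from an edge $\{w_1,w_2\}$ a letter $s\notin\{w_1,w_2\}$ retains the unique endpoint adjacent to $s$ (there is at most one, since $\Gamma$ is triangle-free) or drops to $\{s\}$ when $s$ is adjacent to neither endpoint. Consequently
\[ M=\begin{pmatrix} 0 & \mathbf{1}^{T} & 0\\ 0 & A(\overline{\Gamma}) & B\\ 0 & C & A(L(\Gamma)) \end{pmatrix}, \]
where $A(\overline{\Gamma})$ is the adjacency matrix of the complement, $A(L(\Gamma))$ that of the line graph, $B$ is the vertex--edge incidence matrix, and $C$ is the $E\times V$ matrix recording, for each edge, the vertices adjacent to neither of its endpoints. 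The salient point is that exactly these operators --- $\overline{\Gamma}$, $L(\Gamma)$, and the two incidences --- are the objects whose spectra Theorem~\ref{thm:McKay1} controls simultaneously for $\Gamma_1$ and $\Gamma_2$.

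To conclude I would seek a single block-diagonal intertwiner $P=\mathrm{diag}(1,P_V,P_E)$ satisfying $M_2P=PM_1$ and $P\mathbf{1}=\mathbf{1}$; since $P$ fixes $\mathbf{e}_\emptyset$, these force $M_2^{\,r}P=PM_1^{\,r}$ and hence $\mathbf{e}_\emptyset^{\,T}M_2^{\,r}\mathbf{1}=\mathbf{e}_\emptyset^{\,T}M_2^{\,r}(P\mathbf{1})=(\mathbf{e}_\emptyset^{\,T}P)M_1^{\,r}\mathbf{1}=\mathbf{e}_\emptyset^{\,T}M_1^{\,r}\mathbf{1}$ for every $r$, which is the desired equality of series. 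Unpacked, $P$ must intertwine $A(\overline{\Gamma_1})$ with $A(\overline{\Gamma_2})$ and $A(L(\Gamma_1))$ with $A(L(\Gamma_2))$ while commuting with the incidence blocks, i.e.\ $P_VB_1=B_2P_E$ and $P_EC_1=C_2P_V$, with the uniform $\emptyset$-row and the row sums preserved. Because $\Gamma_i=\tau\cdot T_i$ with $T_1\cong T_2$ as graphs, the natural candidate for $P$ is the identity on the $\tau$-part combined with McKay's relabelling of the common tree, corrected at the shared root $0$; equivalently, using that $0$ is a cut vertex separating $V(\tau)\setminus\{0\}$ from $V(T_i)\setminus\{0\}$, one reduces the entire comparison to matching a finite amount of ``root-boundary'' walk data of $(T_1,0)$ and $(T_2,0)$.

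I expect this last step to be the main obstacle: upgrading the several separate cospectralities of Theorem~\ref{thm:McKay1} into one similarity $P$ that is simultaneously compatible with the vertex--edge coupling $B,C$ and fixes $\mathbf{e}_\emptyset$ and $\mathbf{1}$. Cospectrality by itself yields only unrelated intertwiners on the individual blocks; it is the additional feature that $T_1$ and $T_2$ carry cospectral line graphs arising from the \emph{same} combinatorial relabelling --- not merely cospectral adjacency and complement --- that should make the blockwise intertwiners mesh across the incidence maps. This is consistent with Theorem~\ref{thm2}, where cospectrality \emph{without} the line-graph matching already fails to produce equal geodesic growth, so any correct argument must genuinely use more than the spectra of $\Gamma_i$ and $\overline{\Gamma_i}$.
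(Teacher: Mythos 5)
Your reduction to the transfer matrix of the automaton and your block analysis of $M$ (complement adjacency on the vertex block, line-graph adjacency on the edge block, coupled by incidence-type blocks) are correct, and your instinct that the comparison must come down to ``root-boundary'' data of $(T_1,0)$ versus $(T_2,0)$ is sound. But the proposal has a genuine gap, and you name it yourself: the entire content of the lemma is the construction of the block-diagonal intertwiner $P$, and no construction is given. The ``natural candidate'' cannot work as described: the graph isomorphism $T_1\cong T_2$ does \emph{not} fix the root $0$ (this is exactly why $\Gamma_1=\tau\cdot T_1$ and $\Gamma_2=\tau\cdot T_2$ are non-isomorphic), so ``the identity on the $\tau$-part combined with McKay's relabelling, corrected at the shared root'' is not a definition --- the correction at the root is the whole problem. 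Moreover, the separate cospectralities of Theorem~\ref{thm:McKay1} provide unrelated similarities on the individual blocks with no compatibility across the incidence couplings, and no argument is offered to mesh them; indeed the paper leaves precisely this kind of statement (line-graph cospectrality plus the rest implies equal geodesic growth) as an open \emph{conjecture}, which strongly suggests it is not known how to complete your route.

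The paper's actual proof avoids intertwiners entirely. It works with non-geodesic words: every word in $\Gamma_i^*$ is split into syllables over $(\tau\setminus\{0\})^*$ and $T_i^*$, non-geodesics are classified into three types (a non-geodesic syllable; a subword $0u_j0$ with $u_j\in(\Star_{\Gamma_i}(0)\cap\tau)^*$; a subword $s0s$), and a length-preserving bijection between the non-geodesics of $\Gamma_1^*$ and $\Gamma_2^*$ is assembled from length-preserving bijections on $Geo(T_1)\to Geo(T_2)$ that respect starting and/or ending with $0$. The existence of those bijections is exactly the finite ``root-boundary data'' you anticipated, namely the four identities $\gamma_{G_1}={\gamma_{G_2}}$, ${_0\gamma_{G_1}}={_0\gamma_{G_2}}$, ${^0\gamma_{G_1}}={^0\gamma_{G_2}}$, ${^0_0\gamma_{G_1}}={^0_0\gamma_{G_2}}$ of equation~(\ref{equalities}) --- and these are \emph{not} derived from spectral data at all, but verified by explicit machine computation (Section~\ref{s:equal_special}) for the specific rooted trees $T_1,T_2$. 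To repair your proposal you would either have to carry out that computation (at which point the intertwiner framework is unnecessary and the paper's bijective argument is the simpler way to propagate the four identities to $\Gamma_i$), or genuinely construct $P$, which remains open.
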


In order to simplify the exposition in the proof of Lemma~\ref{equal_growth} we use the notation below.
\begin{Not}\label{not:series}

\ \ 
Let $G$ be a group with generating set $T$ containing the letter $0$. Denote the set of words in $Geo(T)$ starting with $0$ by $Geo_0(T)$,  the set of words in $Geo(T)$ ending in $0$ by $Geo^0(T)$, and the set of words in $Geo(T)$ ending and starting with $0$ by $Geo_0^0(T)$.
\end{Not}

\begin{proof} {\it (of Lemma \ref{equal_growth})}
Notice that $f_{Geo(\Gamma_1)}(r)$ (respectively $f_{Geo(\Gamma_2)}(r)$) is equal to the number of all words of length $r$ in $\Gamma_1^*$ (respectively, $\Gamma_2^*$) minus the number of those words of length $r$ in $\Gamma_1^*$ that are not geodesics. We denote the number of non-geodesics by $\overline{f_{Geo(\Gamma_1)}}(r)$ (and $\overline{f_{Geo(\Gamma_2)}}(r)$, respectively). Since $| \Gamma_1| = |\Gamma_2|$, clearly $f_{Geo(\Gamma_1)}(r)=f_{Geo(\Gamma_2)}(r)$ if and only if $\overline{f_{Geo(\Gamma_1)}}(r)=\overline{f_{Geo(\Gamma_2)}}(r)$. 

We now show that $\overline{f_{Geo(\Gamma_1)}}(r)=\overline{f_{Geo(\Gamma_2)}}(r)$ for all $r \geq 1$. Any word in $\Gamma_1^*$ can be written as $w_1u_1w_2 \dots u_n$, where $u_i \in (\tau\setminus\{0\})^*$ and $w_i \in T_1^*$. A non-geodesic word $w$ in $\Gamma_1^*$ belongs to one of the following sets (or is of the type), depending on its form:
\begin{itemize}
\item[$A:$] $w$ contains non-geodesics $u_i \in (\tau\setminus\{0\})^*$ or $w_j \in T_1^* $, or both, for some $1\leq i,j \leq n$, or 

\item[$B:$] all $w_i$ and $u_i$ are geodesic on their respective alphabets, and there exists $1\leq j <n$ such that $0 u_j 0$ is a subword of $w$ with $u_j \in (\Star_{\Gamma_1}(0) \cap \tau)^*$, or

\item[$C:$] all $w_i$ and $u_i$ are geodesic on their respective alphabets, and $w$ contains a subword of the form $s 0 s$, where $s \in (\Star_{\Gamma_1}(0) \cap \tau)^*$.
\end{itemize}

Notice that the set of non-geodesics is then $A \cup B \cup C$, where $A \cap(B \cup C) =\emptyset$ and $B\cap C \neq \emptyset$.

The number of words in $A$ depends only on the geodesic growth series of $\tau$ and $T_1$, and thus it will be equal to the number of words of type $A$ in $\Gamma_2^*$.

The computations in Section \ref{s:equal_special} show that the following identities hold: $f_{Geo(T_1)}(r)=f_{Geo(T_2)}(r)$, $f_{Geo_0(T_1)}(r)=f_{Geo_0(T_1)}(r)$, $f_{Geo^0(T_1)}(r)=f_{Geo^0(T_2)}(r)$ and $f_{Geo_0^0(T_1)}(r)=f_{Geo_0^0(T_2)}(r)$ for all $r \geq 1$, as a result of (\ref{equalities}). This means that there is a length-presenting bijection $\phi_0$ between $Geo_0(T_1)$ and $Geo_0(T_2)$, i.e. for each geodesic $w=0v \in Geo_0(T_1)$ there is a geodesic $w'=0v'=\phi_0(w) \in Geo_0(T_2)$, and $|w|=|w'|$. Analogously, there is a length-preserving bijection $\phi^0:Geo^0(T_1) \rightarrow Geo^0(T_2)$, and a bijection $\phi_0^0:Geo_0^0(T_1) \rightarrow Geo_0^0(T_2)$. This means there is a length-preserving bijection $\phi:Geo_0(T_1)\cup Geo^0(T_1)\rightarrow Geo_0(T_2)\cup Geo^0(T_2)$ between those geodesics starting or ending with $0$, for which we have that $\phi |_{Geo_0} = \phi_0$, $\phi |_{Geo^0} = \phi^0$ and $\phi |_{Geo_0^0} = \phi_0^0$. By the computations in (\ref{equalities}) we have that $f_{(Geo_0 \cup Geo^0)(T_1)}(r)=f_{(Geo_0 \cup Geo^0)(T_2)}(r)$ by the standard formula for the cardinality of the union of two sets, and since $f_{Geo(T_1)}(r)=f_{Geo(T_2)}(r)$, we also have that $f_{Geo \setminus (Geo_0 \cup Geo^0)(T_1)}(r)=f_{Geo \setminus (Geo_0 \cup Geo^0)(T_2)}(r)$. Thus, there is a length-preserving bijection $\psi$ between $Geo \setminus (Geo_0 \cup Geo^0)(T_1)$ and $Geo \setminus (Geo_0 \cup Geo^0)(T_2)$. 

The bijection $\phi$ can be extended to the set $Geo(T_1)$ of all geodesics on $T_1$ by letting $\phi(w)=\psi(w)$ for all $w \in Geo \setminus (Geo_0 \cup Geo^0)(T_1)$, and then furthermore extended to $Geo(T_1) \cup Geo(\tau \setminus \{0\})$ by letting $\phi(w)=w$ for all $w \in Geo(\tau \setminus \{0\}) $.

Then $\phi$ provides a bijection between the words of type $B$ in $\Gamma_1^*$ and the words of type $B$ in $\Gamma_2^*$. To see this, associate to each $w_1u_1w_2 \dots u_n$ the word $\phi(w_1)\phi(u_1) \dots \phi(w_n)\phi(u_n)=\phi(w_1)u_1 \dots \phi(w_n)u_n$. Then $w_i$ ends in $0$, $w_{i+1}$ starts with $0$, and $u_i\in \Star_{\tau}(0)^*$ if and only if $\phi(w_i)$ ends in $0$, $\phi(w_{i+1})$ starts with $0$, and $\phi(u_i)=u_i\in \Star_{\tau}(0)^*$, by definition.

It is immediate to see that $\phi$ also provides a bijection between the words of type $C$, and between the words of type $B \cap C$ in $\Gamma_1$ and $\Gamma_2$, respectively. Thus, there is a length-preserving bijection between the words of type $A \cup B \cup C$ (\textit{i.e.} all non-geodesic words) in $\Gamma_1$ and $\Gamma_2$. This concludes the proof of the lemma.
\end{proof}

\begin{proof} {\it (of Theorem \ref{thm1})}
Let $\Upsilon_k = \{ \tau_1, \tau_2, \dots \}$ be the set of non-isomorphic trees on $k \geq 2$ vertices. For $i=1,2$ the two families of trees $\mathcal{T}^i_n = \{\tau \cdot T_i | \tau \in \Upsilon_k \}$, $k=n-15$, satisfy Theorem \ref{thm1}. 

From Theorem \ref{thm:McKay1} we already know that $\Gamma_1=\tau \cdot T_1$ and $\Gamma_2=\tau \cdot T_2$ are co-spectral, for any $\tau \in \Upsilon_k$. By Lemma \ref{equal_growth} the groups $G(\Gamma_1)$ and $G(\Gamma_2)$ have the same geodesic growth series.
\end{proof}

\begin{Thm}[Lemma 4.3 in \cite{McKay}]\label{thm:McKay2}
Let $p_i(n)$, $i=1,2$, be the proportion of trees on $n$ vertices that have $T_i$ as a limb. Then $p_1(n) = p_2(n)$ for all $n$ and $\lim_{n\rightarrow \infty} p_i(n) = 1$.
\end{Thm}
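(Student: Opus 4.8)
The plan is to reduce both assertions to a generating-function analysis of rooted trees that \emph{avoid} $T_i$ as a limb, and then to transfer the resulting counts to unrooted trees by a centroid-rooting argument. Throughout I use that $T_1$ and $T_2$ each have exactly $16$ vertices and share the same underlying unrooted tree, differing only in the position of the root. Recall that a limb of an unrooted tree is the component obtained by deleting an edge, rooted at the detached endpoint, so that for any fixed rooting the limbs of the tree are exactly the fringe subtrees of the rooting.

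For $i=1,2$ let $f_i(x)=\sum_n a^{(i)}_n x^n$ count the rooted trees on $n$ vertices having no limb (equivalently, no fringe subtree) isomorphic, as a rooted tree, to $T_i$. Decomposing a rooted tree into its root together with the multiset of branches hanging from it, and noting that every branch of $T_i$ is strictly smaller than $T_i$ and hence automatically $T_i$-avoiding, I would obtain the P\'olya-type functional equation
\begin{equation}\label{eq:favoid}
f_i(x) = x\,\exp\!\left(\sum_{k\ge 1}\frac{f_i(x^k)}{k}\right) - x^{16}.
\end{equation}
The crucial observation for part (1) is that \eqref{eq:favoid} depends on $T_i$ \emph{only through its number of vertices}: the set of rooted trees all of whose branches avoid $T_i$ equals the $T_i$-avoiding trees together with the single tree $\cong T_i$, and the $-x^{16}$ term records exactly this one excluded configuration of size $16$. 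Since $|T_1|=|T_2|=16$, the two functional equations literally coincide, and because \eqref{eq:favoid} together with $f_i(0)=0$ determines all coefficients by a well-founded recursion (the coefficient of $x^n$ depends only on coefficients of degree $<n$), we conclude $f_1\equiv f_2$. To pass from rooted fringe subtrees to unrooted limbs I would root each unrooted tree at its centroid and invoke the dissimilarity (Otter-type) correction, which expresses the unrooted $T_i$-limb-avoiding generating function as a universal polynomial expression in $f_i(x)$ and $f_i(x^2)$; its shape is $i$-independent, so $f_1\equiv f_2$ forces the unrooted avoiding-counts, and hence the containing-counts, to agree. This gives $p_1(n)=p_2(n)$ for every $n$.

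For part (2) I would show that the radius of convergence $\rho_i$ of $f_i$ is strictly larger than the radius $\rho$ of the generating function $R(x)$ of all rooted trees. Writing $Re^{-R}=xF(x)$ with $F(x)=\exp\!\big(\sum_{k\ge2}R(x^k)/k\big)$ analytic past $\rho$, the standard P\'olya--Otter analysis places the square-root singularity of $R$ at the point where $R=1$ and $xF(x)=e^{-1}$, giving $r_n\sim c\,\rho^{-n}n^{-3/2}$. Performing the analogous manipulation on \eqref{eq:favoid} with $g\coloneqq f_i+x^{16}$ yields $g e^{-g}=xF_i(x)\,e^{-x^{16}}$, where $F_i(x)=\exp\!\big(\sum_{k\ge2}f_i(x^k)/k\big)$. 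Since $f_i\le R$ coefficientwise and $e^{-x^{16}}<1$ for $x>0$, the right-hand side at $x=\rho$ is strictly below $e^{-1}=\max_t te^{-t}$, so $g(\rho)<1$; thus $f_i$ is still analytic at $\rho$ and its singularity is delayed to some $\rho_i>\rho$. Consequently $a^{(i)}_n=O(\rho_i^{-n})$ with $\rho_i^{-1}<\rho^{-1}$. Finally, centroid-rooting embeds the unrooted $T_i$-limb-avoiding trees on $n$ vertices into rooted trees with no proper fringe subtree $\cong T_i$, so their number is $u_n=O(a^{(i)}_n)$, whereas $t_n\ge r_n/n$; the exponential gap $\rho_i^{-1}<\rho^{-1}$ then forces $u_n/t_n\to 0$, i.e. $p_i(n)\to 1$.

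The main obstacle is the strict inequality $\rho_i>\rho$: it is the analytic heart of the argument and rests on showing that subtracting the single term $x^{16}$ genuinely pushes the critical value $g=1$ strictly outward, via the strict comparison $xF_i(x)e^{-x^{16}}<xF(x)$ and the maximality of $t\mapsto te^{-t}$ at $t=1$ (one must also bootstrap analyticity of $F_i$ slightly beyond $\rho$ to make this rigorous). A secondary, bookkeeping-type difficulty is making the rooted-to-unrooted transfer airtight---verifying that the fringe subtrees of the centroid rooting are precisely the limbs of the unrooted tree, and that the dissimilarity correction is indeed $i$-independent---but this is routine once the functional equation \eqref{eq:favoid} is in hand.
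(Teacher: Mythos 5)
The paper itself contains no proof of this statement: it is imported verbatim as Lemma~4.3 of \cite{McKay}, so your proposal has to stand entirely on its own merits. Two of its components are sound. The functional equation $f_i(x)=x\exp\bigl(\sum_{k\ge1}f_i(x^k)/k\bigr)-x^{16}$ is correct, depends on $T_i$ only through $|T_i|=16$, and does force $f_1\equiv f_2$; this is a genuinely good idea. The analytic half of part (2) is also essentially right: since $g:=f_i+x^{16}\le R$ coefficientwise (where $R$ is the rooted-tree series), one gets $g(\rho)\le R(\rho)=1$, while $g(\rho)e^{-g(\rho)}=\rho F_i(\rho)e^{-\rho^{16}}<e^{-1}$ rules out $g(\rho)=1$, so the singularity of $f_i$ moves strictly past $\rho$; and because an unrooted tree with no $T_i$-limb, rooted anywhere, has no proper fringe subtree $\cong T_i$, the avoiders number at most $a^{(i)}_n$, which is exponentially negligible against $t_n\ge r_n/n$ with $r_n\sim c\rho^{-n}n^{-3/2}$. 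Hence $p_i(n)\to1$ follows (for either notion of limb).

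The genuine gap is in part (1), and it sits exactly in the step you dismiss as bookkeeping. Your claim that ``the limbs of the tree are exactly the fringe subtrees of the rooting'' is false: each edge deletion produces \emph{two} limbs, and the one containing the root (the co-fringe) is invisible to fringe-avoidance. As a result, neither transfer mechanism you invoke expresses the unrooted avoiding count through $f_i$. The Otter-type correction fails at every $n\ge17$: attach a pendant vertex $p$ to the vertex $0$ of $T_1$ and mark the edge $\{0,1\}$; the two halves (on $5$ and $12$ vertices) avoid $T_1$ as fringe subtrees and neither is isomorphic to $T_1$, yet deleting $\{0,p\}$ exhibits $T_1$ as a limb, so edge-rooted avoiders are \emph{not} unordered pairs from any class counted by $f_i$, and no universal polynomial in $f_i(x),f_i(x^2)$ counts them. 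The centroid identification fails for $17\le n\le32$: attach a path on four vertices to the vertex $0$ of $T_i$ (so $n=20$); this tree has $T_i$ as a limb, but that limb has $16>n/2$ vertices and therefore cannot be a fringe subtree of the centroid rooting, so the tree is miscounted as an avoider. (For $n>32$ the size constraint kills all co-fringe limbs of size $16$ and your argument can be completed by treating unicentric and bicentric trees separately; but the statement asserts equality for \emph{all} $n$.) Finally, your definition of limb (one side of an edge deletion) is strictly narrower than McKay's (a vertex together with any nonempty union of branches at it), which is the notion matching the paper's coalescence $\tau\cdot T_i$; under McKay's definition a limb may use a proper subset of the branches at a vertex, the relevant root-level exclusion involves the branch multiset of $T_i$ (sizes $\{5,10\}$ for $T_1$ versus $\{4,11\}$ for $T_2$) rather than just $|T_i|$, and the functional equation must be redone (the excluded term becomes $x^{16}\exp\bigl(\sum_k f_i(x^k)/k\bigr)$, which happily is still size-dependent only --- but none of this is in your write-up).
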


From Theorem \ref{thm:McKay2}, we obtain that $\frac{\mathrm{card}\, \mathcal{T}^i_n}{\mathrm{card}\, \Upsilon_n} \rightarrow 1$, as $n \rightarrow \infty$.

\section{Computing the numbers of special geodesics}\label{s:equal_special}

In this section we prove, by concrete computations, the equalities between the numbers of special geodesics required in the proof of Lemma \ref{equal_growth}.
 
Let $T_1$ and $T_2$ be as in Figure \ref{fig:trees1}, and recall Notation \ref{not:series}.
For each group $G_i = G(T_i)$, $i=1,2$, we construct a finite automaton $A_i$ accepting the geodesic language of $G_i$, as described in Definition \ref{def:DFA}, and
with the help of $A_i$ we compute the growth series
$$
\gamma_{G_1}(t) = \sum_{r\geq 0}\, f_{Geo(T_1)}(r)\,t^r,\,\, _0\gamma_{G_1}(t) = \sum_{r\geq 0}\, f_{Geo_0(T_1)}(r)\,t^r,
$$
$$
^0\gamma_{G_1}(t) = \sum_{r\geq 0}\, f_{Geo^0(T_1)}(r)\,t^r, \mbox{ and } ^0_0\gamma_{G_1}(t) = \sum_{r\geq 0}\, f_{Geo_0^0(T_1)}(r)\,t^r.
$$
We also compute the analogous growth series $\gamma_{G_2}(t)$, $_0\gamma_{G_2}(t)$, $^0\gamma_{G_2}(t)$ and $^0_0\gamma_{G_2}(t)$ for $G_2$, where the series coefficients are given by the sequences $Geo(T_2)(r)$, $Geo(T_2)_0(r)$, $Geo(T_2)^0(r)$ and $Geo(T_2)^0_0(r)$, respectively. 

Our computations, which we elaborate upon below, show that 
\begin{equation}\label{equalities}
\gamma_{G_1}(t) = \gamma_{G_2}(t), \  _0\gamma_{G_1}(t) = {_0\gamma_{G_2}(t)},  \ ^0\gamma_{G_1}(t)= {^0\gamma_{G_2}(t)} \ \textrm{and} \ ^0_0\gamma_{G_1}(t) = {^0_0\gamma_{G_2}(t)}. 
\end{equation}

These identities prove the equality of the corresponding numbers of geodesics. 

Let $A$ be a deterministic finite-state automaton with accepting states $q_i$, $i=0,1,\dots, N$, where $q_0$ is the start state and the ``fail'' state is denoted by $q$. Let $M = M(A)$ be the transition matrix of the automaton $A$. Computing the generating function $\gamma_A (t)$ of $A$ is a standard technique, and the formula for $\gamma_A(t)$ is
\begin{equation} \label{DFAseries}
\gamma_A(t) = \frac{e^T\, \widetilde{M} w}{\det(I - t M)},
\end{equation}
where $I$ is the $N \times N$ identity matrix, $\widetilde{M}$ is the adjoint matrix of $I - t M$, $e=(1, 0, \dots, 0)^T$ and $w=(1,1, \dots, 1)^T$ are two vectors in $\mathbb{Z}^N$.
 
Now let $A_i$ be the deterministic finite-state automata accepting the language of geodesics in $G_i$, $i=1,2$, and 
 let $M_i = M(A_i)$ be the transition matrix of $A_i$. Then the geodesic growth series $\gamma_{G_i}(t)$ is a rational function (determined by the equality (\ref{DFAseries})), and the coefficients of its numerator and denominator can be easily computed, see \cite{Epstein}. 
The Python code $\mathrm{Monty}$ \cite{Pyth} may perform the above computation either by finding the symbolic determinant $\det(I - t M_i)$ (usually slow), or by applying the Berlekamp-Massey algorithm (a faster one). This Python code starts by creating the finite-state automaton $A_i$, given a triangle-free graph $T_i$ (in this case, a tree), and then proceeds to determining $\gamma_{G_i}(t)$. 

Since $G_1$ and $G_2$ are isomorphic, the equality $\gamma_{G_1}(t) = \gamma_{G_2}(t)$ is immediate. Here we provide an explicit formula for these identical growth series. The output of $\mathrm{Monty}$ for both trees $T_1$ and $T_2$ consists of two finite-state automata $A_i$ which are isomorphic, as one only renumbers the vertices of $T_2$ in order to obtain $T_1$. Each $A_i$ has $32$ states and $466$ transition arrows, and the corresponding growth series are
\begin{flalign*}
&\gamma_{G_1}(t) = \gamma_{G_2}(t) = (1+t)(1 + 2t - 2t^3 - 4t^4 - t^5)(1 + 5t + 10t^2 + 9t^3 - 5t^4 - 26t^5 - 34t^6&\\
& - 22t^7 - t^8 + 7t^9 + 4t^{10}) (1 - 8t - 85t^2 - 243t^3 - 222t^4 + 332t^5 + 1194t^6 + 1349t^7 + 132t^8&\\
& - 1510t^9 - 2008t^{10} - 1088t^{11} + 28t^{12} + 359t^{13} + 170t^{14} + 15t^{15})^{-1}.& 
\end{flalign*}

Now we compute the functions $_0\gamma_{G_i}(t)$, $i=1,2$, which will turn out to be equal, as well. However, in this case the equality is not known to hold beforehand: even though the groups $G_1$ and $G_2$ are isomorphic, the image of vertex $0$ in $T_1$ under the canonical isomorphism does not correspond to vertex $0$ in $T_2$. 
We shall explicitly compute the generating function $_0\gamma_{G_i}(t)$ for the number of geodesic words starting with $0$ accepted by each $A_i$, $i=1,2$, and then compare these functions. Suppose that the start state of each $A_i$ is $q_0 = \emptyset$. Let the corresponding transition function $\delta_i$ be so that $\delta_i(q_0, 0) = q_{k_i} = \{0\}$. If a word $w$ labels a path from $q_{k_i}$ to an accept state, then the word $0w$ is a geodesic word in $G_i$ starting with $0$. 
Thus, we have to compute the generating function $_0\alpha_{G_i}(t)$ for the number of words starting at $q_{k_i}$ and ending at an accept state:
\begin{equation*}
_0\alpha_{G_i}(t) = \frac{e^T\, \widetilde{M_i}\, w}{\det(I - t M_i)},
\end{equation*}
where $e = (0, \dots, 0, \underbrace{1}_{k_i}, 0, \dots, 0)^T$ and $w = (1, 1, \dots, 1)^T$.
Then we use the fact that $_0\gamma_{G_i}(t) = t \cdot {_0\alpha_{G_i}(t)}$. By symmetry, we get $^0\gamma_{G_i}(t) = {_0\gamma_{G_i}(t)}$, $i=1,2$.

By using $\mathrm{Monty}$ we obtain 
\begin{flalign*}
&_0\gamma_{G_1}(t) = {_0\gamma_{G_2}(t)} = t (1+t) (1 + 2t - 2t^3 - 4t^4 - t^5) (1 + 4t + 4t^2 - 3t^3 - 9t^4 - 5t^5 + 3t^6&\\ 
& + t^7 - 3t^8 - 3t^9) (1 - 8t - 85t^2 - 243t^3 - 222t^4 + 332t^5 + 1194t^6 + 1349t^7 + 132t^8&\\
& - 1510t^9 - 2008t^{10} - 1088t^{11} + 28t^{12} + 359t^{13} + 170t^{14} + 15t^{15})^{-1}.&
\end{flalign*} 

Finally, it remains to compute the growth series $^0_0\gamma_{G_i}(t)$. 

There are two kinds of words forming disjoint subsets of the geodesic language of $G_i$ that we are interested in:
\begin{itemize}
\item[(I)] the words $w$ starting at the accept state $q_{k_i} = \delta(q_0, 0) = \{0\}$ of $A_i$ and coming back to it: then the geodesic word $0w$ starts and ends with a ``0'' (since there are only $0$-transitions leading to $q_{k_i} = \{0\}$ and no $0$-transition coming out of $q_{k_i}$), by Definition \ref{def:DFA} (i).

\item[(II)] the words $w$ starting at the state $q_{k_i} = \{0\}$ and ending at a state $q_{l_i}$ such that $\delta(q_{l_i}, 0) = q_{m_i} \neq q_{k_i}$ and $q_{m_i}$ is an accept state: then the word $0w0$ will be a geodesic word starting and ending with a ``0''. 
\end{itemize}

Let $^0_0\alpha_{G_i}(t)$ be the generating function for the words of type (I), and $^0_0\beta_{G_i}(t)$ be that for the words of type (II). Then, $^0_0\gamma_{G_i}(t) = t\cdot {^0_0\alpha_{G_i}(t)} + t^2 \cdot {^0_0\beta_{G_i}(t)}$. 
We have that 
\begin{equation*}
^0_0\alpha_{G_i}(t) = \frac{e^T\, \widetilde{M_i}\, w}{\det(I - t M_i)},
\end{equation*}
with $e = w = (0, \dots, 0, \underbrace{1}_{k_i}, 0, \dots, 0)^T$.

Analogously,
\begin{equation*}
^0_0\beta_{G_i}(t) = \frac{e^T\, \widetilde{M_i}\, w}{\det(I - t M_i)},
\end{equation*}
with $e = (0, \dots, 0, \underbrace{1}_{k_i}, 0, \dots, 0)^T$ and $w$ having a ``1'' at position $l_i$ for all states $q_{l_i}$ as described above, and zeroes at all other places.

By using $\mathrm{Monty}$ we obtain
\begin{flalign*}
&^0_0\gamma_{G_1}(t) = {^0_0\gamma_{G_2}(t)} = t\cdot (1 - 5t - 94t^2 - 374t^3 - 456t^4 + 955t^5 + 4275t^6 + 5652t^7 - 1617t^8&\\
& - 16773t^9 - 24255t^{10} - 7337t^{11} + 26583t^{12} + 45100t^{13} + 26181t^{14} - 12789t^{15} - 34553t^{16}&\\
& - 24957t^{17} - 3147t^{18} + 8130t^{19} + 6288t^{20} + 1398t^{21} - 458t^{22} - 284t^{23} - 24t^{24})\cdot(1 + 3t&\\
& + 2t^2 - 3t^3 - 9t^4 - 8t^5 + 4t^7 + 3t^8 - t^9)^{-1} (1 - 8t - 85t^2 - 243t^3 - 222t^4 + 332t^5 + 1194t^6&\\
& + 1349t^7 + 132t^8 - 1510t^9 - 2008t^{10} - 1088t^{11} + 28t^{12} + 359t^{13} + 170t^{14} + 15t^{15})^{-1}.&
\end{flalign*}

Thus, the numbers of special geodesics in $G_1$ and $G_2$ coincide.  

\section{RACGs with different geodesic growth}\label{s:racg-diff-growth}

In this section we give a proof of Theorem~\ref{thm2}. Our construction will be analogous to that in Section~\ref{s:racg}, although we shall use different trees, $S_1$ and $S_2$, in order to construct the families $\mathcal{S}^i_n = \{ \tau \cdot S_i | \tau \in \Upsilon_k \}$, $k = n-9$. Namely, we will use the rooted trees in Fig.~\ref{fig:trees2}. 

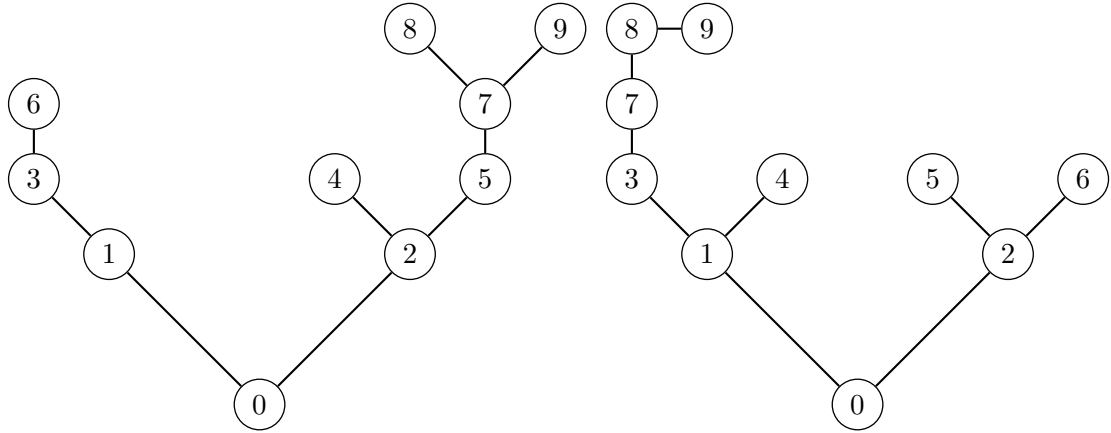
\begin{figure}[ht]
\centering
\begin{subfloat}{}
\begin{tikzpicture}
\GraphInit[vstyle=Normal]
\SetGraphUnit{2}
\Vertex{0}
\NOWE(0){1} \NOEA(0){2} 
\Edge(0)(1) \Edge(0)(2)
\SetGraphUnit{1}
\NOWE(1){3} \NOWE(2){4} \NOEA(2){5}
\Edge(1)(3) \Edge(2)(4) \Edge(2)(5)
\NO(3){6} \NO(5){7} \NOWE(7){8} \NOEA(7){9}
\Edge(3)(6) \Edge(5)(7) \Edge(7)(8) \Edge(7)(9)
\end{tikzpicture}
\end{subfloat}
~
\begin{subfloat}{}
\begin{tikzpicture}
\GraphInit[vstyle=Normal]
\SetGraphUnit{2}
\Vertex{0}
\NOWE(0){1} \NOEA(0){2} 
\Edge(0)(1) \Edge(0)(2)
\SetGraphUnit{1}
\NOWE(1){3} \NOEA(1){4} \NOWE(2){5} \NOEA(2){6}
\Edge(1)(3) \Edge(1)(4) \Edge(2)(5) \Edge(2)(6)
\NO(3){7} \NO(7){8} \EA(8){9}
\Edge(3)(7) \Edge(7)(8) \Edge(8)(9)
\end{tikzpicture}
\end{subfloat}
\caption{Godsil's rooted trees: $S_1$ on the left, and $S_2$ on the right}\label{fig:trees2}
\end{figure}

We set $\Gamma_1 = \tau \cdot S_1$ and $\Gamma_2 = \tau \cdot S_2$, where $\tau$ is an arbitrary tree with $n$ vertices labelled $\{0,\dots,n-1\}$. Suppose that $0$ represents $\tau$'s root. Then the following holds:

\begin{Thm}[page 27 of \cite{Godsil-slides}]\label{thm:Godsil}
The trees $\Gamma_1$ and $\Gamma_2$ are not isomorphic, but are co-spectral and have co-spectral complements. Their line graphs $L(\Gamma_i)$, $L(\overline{\Gamma_i})$ and $\overline{L(\Gamma_i)}$ are not necessarily co-spectral. 
\end{Thm}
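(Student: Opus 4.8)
The plan is to route both spectral claims through Schwenk's reduction of a coalescence to rooted invariants of its two parts, and to treat the remaining assertions (non-isomorphism, and the failure for line graphs) by finite structural and spectral computations on the two fixed $10$-vertex trees. First I would recall Schwenk's three-term formula for the characteristic polynomial $\phi(\cdot,x)=\det(xI-A)$ of a coalescence: writing $H-v$ for $H$ with vertex $v$ and its incident edges deleted, and denoting the common root by $0$, expansion of the determinant along the row of the cut vertex $0$ (the cycle corrections vanish since everything in sight is a forest) gives
\begin{equation*}
\phi(\tau\cdot S_i,x)=\phi(\tau,x)\,\phi(S_i-0,x)+\phi(\tau-0,x)\,\phi(S_i,x)-x\,\phi(\tau-0,x)\,\phi(S_i-0,x).
\end{equation*}
The right-hand side depends on $S_i$ only through the pair $\bigl(\phi(S_i,x),\phi(S_i-0,x)\bigr)$, so it suffices to verify the rooted-cospectrality identities $\phi(S_1,x)=\phi(S_2,x)$ and $\phi(S_1-0,x)=\phi(S_2-0,x)$; once these hold, $\phi(\tau\cdot S_1,x)=\phi(\tau\cdot S_2,x)$ for every rooted $\tau$. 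Since $S_1,S_2$ are forests these are finite identities between matching polynomials (equivalently, between the counts of $k$-matchings), checked once and for all — exactly the kind of computation $\mathrm{Monty}$ automates.

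For the complements I would use $\overline{A}=J-I-A$ together with the matrix-determinant lemma to obtain
\begin{equation*}
\phi(\overline{G},x)=(-1)^{n}\,\phi(G,-1-x)\bigl(1+W_G(-1-x)\bigr),\qquad W_G(x)=\mathbf{1}^{T}(xI-A)^{-1}\mathbf{1}=\sum_{k\ge0}w_k\,x^{-k-1},
\end{equation*}
where $w_k=\mathbf{1}^{T}A^{k}\mathbf{1}$ counts walks of length $k$. Because $\Gamma_1,\Gamma_2$ are already cospectral, their complements are cospectral if and only if $W_{\Gamma_1}=W_{\Gamma_2}$, i.e. the two coalescences have the same number of walks of each length. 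The task is then to produce a coalescence formula for $W$: splitting each walk in $\tau\cdot S_i$ into its excursions into the $\tau$-part and the $S_i$-part, which can only be spliced at the cut vertex $0$, a Schur-complement reduction at $0$ expresses $W_{\tau\cdot S_i}$ rationally in the three rooted walk functions of each factor,
\begin{equation*}
W_{00}^{H}(x)=e_0^{T}(xI-A_H)^{-1}e_0,\quad W_{0\bullet}^{H}(x)=e_0^{T}(xI-A_H)^{-1}\mathbf{1},\quad W_{\bullet\bullet}^{H}(x)=\mathbf{1}^{T}(xI-A_H)^{-1}\mathbf{1},
\end{equation*}
for $H\in\{\tau,S_i\}$. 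Consequently $W_{\tau\cdot S_1}=W_{\tau\cdot S_2}$ for all $\tau$ as soon as the rooted triples $(W_{00},W_{0\bullet},W_{\bullet\bullet})$ of $S_1$ and $S_2$ agree, which is again a finite rational check on the two fixed trees.

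The last two assertions are comparatively soft. For non-isomorphism, observe that $S_1$ and $S_2$ are non-isomorphic as rooted trees — indeed $S_1-0$ and $S_2-0$ are already non-isomorphic forests — and appear as identifiable limbs at the attaching vertex of $\Gamma_i$; recovering the limb at $0$ (for instance via the multiset of rooted-subtree types hanging from $0$, or any finer invariant beyond the common degree sequence) separates $\Gamma_1$ from $\Gamma_2$. For the line graphs I would pass to the signless Laplacian $Q=D+A$ through $\det(xI_m-A_{L(G)})=(x+2)^{m-n}\det((x+2)I_n-Q)$, noting that for the bipartite graphs at hand $Q$ is cospectral with the Laplacian $L=D-A$; hence $L(\Gamma_1),L(\Gamma_2)$ are cospectral exactly when $\Gamma_1,\Gamma_2$ are Laplacian-cospectral. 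To prove this \emph{need not} hold it suffices to exhibit a single instance — already the smallest admissible $\tau$ — for which $Q(\Gamma_1)$ and $Q(\Gamma_2)$ have different spectra, a one-off determinant computation that simultaneously handles $L(\overline{\Gamma_i})$ and $\overline{L(\Gamma_i)}$.

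I expect the genuine obstacle to be the complement claim: the coalescence formula for the walk generating function and the verification that the rooted walk triples of $S_1$ and $S_2$ coincide. Unlike the characteristic polynomial, which localises cleanly at the cut vertex via Schwenk's recursion, $W_G$ is a global quantity, and matching it forces control of the \emph{main angles} of $S_1,S_2$ relative to their roots (the projections of $\mathbf{1}$ onto eigenspaces, weighted by the root coordinate), not merely their rooted spectra. Getting the bookkeeping of cross-excursions through $0$ correct, and confirming that the resulting root-walk identities actually hold for Godsil's specific trees, is where the real work lies; the plain cospectrality of $\Gamma_1,\Gamma_2$ and the line-graph counterexample are then routine finite computations.
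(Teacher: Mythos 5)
Your spectral machinery is essentially the paper's own route, re-derived from scratch: your Schwenk three-term formula is precisely the coalescence identity the paper quotes from \cite[Lemma 2.2 (i)]{McKay}, and your walk-generating-function treatment of the complements is in effect a re-proof of \cite[Theorem 3.1 (i)]{McKay}, which the paper simply cites; your line-graph step (reduce to signless-Laplacian spectra, then one explicit counterexample) matches the paper's choice of $\tau = K_2$ followed by a direct computation. One point where you are actually more careful than the paper: you isolate \emph{both} rooted identities $\phi(S_1,x)=\phi(S_2,x)$ and $\phi(S_1-0,x)=\phi(S_2-0,x)$ as the conditions to be verified, whereas the paper's text deduces cospectrality of $\Gamma_1,\Gamma_2$ from the unrooted isomorphism $S_1\cong S_2$ alone; that only yields the first identity and leaves the term $\left(\phi(\tau,x)-x\,\phi(\tau-0,x)\right)\left(\phi(S_1-0,x)-\phi(S_2-0,x)\right)$ in the difference of the two coalescence formulas unaccounted for, so the second identity is genuinely needed and must be checked on the two rooted trees.

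The genuine gap is your non-isomorphism step. You propose to separate $\Gamma_1$ from $\Gamma_2$ by ``recovering the limb at $0$,'' but the limb at $0$ is not an invariant of the abstract graph $\Gamma_i$: a hypothetical isomorphism $\Gamma_1\to\Gamma_2$ is under no obligation to carry vertex $0$ to vertex $0$. Since $S_1\cong S_2$ as unrooted trees via a map that moves the root, limb data at a \emph{designated} vertex distinguishes nothing until you have an intrinsic, root-free way of locating that vertex. The litmus test: your argument never uses the hypothesis that $\tau$ has at least two vertices, yet for $\tau$ a single vertex the conclusion is false, because then $\Gamma_1=S_1\cong S_2=\Gamma_2$. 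Concretely, the rooted tree $\sigma$ that hangs off $0$ in $S_1$ (the paper's Fig.~3) also occurs as a limb of $S_2$, rooted at the interior vertex $3$, which is exactly how the unrooted isomorphism arises. The paper closes this hole with a McKay-style counting argument: the number of vertices of $\Gamma_i$ carrying a limb isomorphic to the fixed rooted tree $\sigma$ is an isomorphism invariant; attaching a $\tau$ with at least two vertices at the root enlarges the would-be $\sigma$-limb on the $S_2$ side past seven vertices while leaving the $\sigma$-limb of the $S_1$ side intact, so the counts differ. You need this count over \emph{all} vertices (or some other genuinely root-free invariant) rather than an inspection at the attaching vertex; as written, this step of your proof would fail.
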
 

\begin{proof} The trees $\Gamma_1$ and $\Gamma_2$ are not isomorphic, since $\Gamma_1$ has the rooted tree $\sigma$ depicted in Fig.~\ref{fig:trees3} as a limb more times than $\Gamma_2$ does.

\begin{figure}[ht]
\centering
\begin{tikzpicture}
\GraphInit[vstyle=Normal]
\SetGraphUnit{2}
\Vertex{0}
\NOEA(0){2} 
\Edge(0)(2)
\SetGraphUnit{1}
\NOWE(2){4} \NOEA(2){5}
\Edge(2)(4) \Edge(2)(5)
\NO(5){7} \NOWE(7){8} \NOEA(7){9}
\Edge(5)(7) \Edge(7)(8) \Edge(7)(9)
\end{tikzpicture}
\caption{The rooted tree $\sigma$ with root $0$}\label{fig:trees3}
\end{figure}
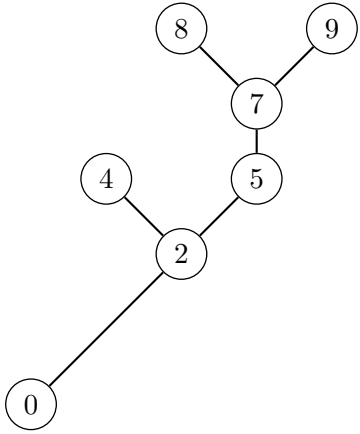

The characteristic polynomials of $\tau$, $S_i$ and $\Gamma_i$ ({\it i.e.} the characteristic polynomials of the adjacency matrices of these graphs) are related by
\begin{equation*}
\phi_{\Gamma_i}(t) = \phi_{\tau}(t) \cdot \phi_{S_i\setminus \{0\}}(t) + \phi_{\tau\setminus \{0\}}(t) \cdot \phi_{S_i}(t) - t\cdot \phi_{\tau\setminus \{0\}}(t) \cdot \phi_{S_i\setminus \{0\}}(t),
\end{equation*}
according to \cite[Lemma 2.2 (i)]{McKay}. Given that the trees $S_1$ and $S_2$ are isomorphic as graphs (though not as rooted trees), we obtain that $\Gamma_1$ and $\Gamma_2$ share the same characteristic polynomial ({\it i.e.} are co-spectral). 

The fact that $\overline{\Gamma_1}$ and $\overline{\Gamma_2}$ are co-spectral follows from the above and \cite[Theorem 3.1 (i)]{McKay}.

Now, by letting $\tau$ be the line graph on two vertices we obtain that $L(\Gamma_1)$, $L(\overline{\Gamma_1})$ and $\overline{L(\Gamma_1)}$ have different characteristic polynomials from $L(\Gamma_2)$, $L(\overline{\Gamma_2})$ and $\overline{L(\Gamma_2)}$, respectively.
\end{proof}

\begin{Lem}\label{lem3}
Let $\tau$ be a rooted tree with at least two vertices, and let $\Gamma_1 = \tau \cdot S_1$ and $\Gamma_2 = \tau \cdot S_2$. The RACGs $G(\Gamma_1)$ and $G(\Gamma_2)$ have distinct geodesic growth series. 
\end{Lem}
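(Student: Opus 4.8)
The plan is to re-run the block decomposition of Lemma~\ref{equal_growth}, but to mine it for an obstruction to equality rather than for a bijection. Writing an arbitrary word of $\Gamma_i^{*}=(\tau\cdot S_i)^{*}$ as an alternating concatenation $w_1u_1w_2\cdots$ with $w_j\in S_i^{*}$ and $u_j\in(\tau\setminus\{0\})^{*}$, one splits the non-geodesics into the same three types $A$, $B$, $C$, so that the non-geodesic counting series is the inclusion--exclusion combination of the contributions of $A$, $B$, $C$ and $B\cap C$. Since $S_1$ and $S_2$ are isomorphic as unrooted graphs, $G(S_1)\cong G(S_2)$ and hence $\gamma_{S_1}(t)=\gamma_{S_2}(t)$; the type-$A$ series depends only on the geodesic series of $\tau\setminus\{0\}$ and of $S_i$ and on the fixed block-alternation pattern, so it is identical for $\Gamma_1$ and $\Gamma_2$. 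Consequently the whole discrepancy $\Delta(t)\coloneqq\gamma_{G(\Gamma_1)}(t)-\gamma_{G(\Gamma_2)}(t)$ is concentrated in the root-interaction terms $B$, $C$ and $B\cap C$.

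These interaction terms are assembled from exactly the special series of Section~\ref{s:equal_special}, now computed for $S_i$: the series ${}_0\gamma_{S_i}(t)$, ${}^0\gamma_{S_i}(t)$ and ${}^0_0\gamma_{S_i}(t)$ of geodesics of $S_i$ that start with, end with, or both start and end with the root $0$, weighted by the generating function of geodesic words over $\Star_{\tau}(0)$. By the reversal symmetry noted in Section~\ref{s:equal_special} one has ${}_0\gamma_{S_i}={}^0\gamma_{S_i}$, so only ${}_0\gamma_{S_i}$ and ${}^0_0\gamma_{S_i}$ are genuinely at play. The next step is to compute these with $\mathrm{Monty}$ and to verify that, although $\gamma_{S_1}=\gamma_{S_2}$, at least one of the profile differences $D_0(t)\coloneqq{}_0\gamma_{S_1}(t)-{}_0\gamma_{S_2}(t)$ and $D_{00}(t)\coloneqq{}^0_0\gamma_{S_1}(t)-{}^0_0\gamma_{S_2}(t)$ is nonzero. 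This is to be expected, since the rooted structures differ: the root of $S_1$ has neighbours of degrees $2$ and $3$, whereas both neighbours of the root of $S_2$ have degree $3$.

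With these ingredients one writes $\Delta(t)$ explicitly as a fixed bilinear expression in the profile differences $D_0$, $D_{00}$ and in generating functions attached to $\tau$, each of the latter a power series with positive constant term (coming from the empty word over $\Star_{\tau}(0)$ and from the empty $\tau\setminus\{0\}$-context). It then remains to show that this expression is not identically zero, and to do so for \emph{every} rooted tree $\tau$ with at least two vertices.

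The main obstacle is precisely this last point, the uniform-in-$\tau$ non-cancellation: a priori the two profile differences $D_0$ and $D_{00}$ could conspire to cancel inside $\Delta(t)$ for some exceptional $\tau$. I would control this by a lowest-degree argument. Let $\ell$ be the least degree at which $D_0$ or $D_{00}$ is nonzero; because each $\tau$-factor has nonzero lowest-order coefficient, the lowest-degree contributions of $D_0$ and $D_{00}$ to $\Delta$ sit at degrees separated by the structural shift recorded by the surrounding $0$'s in types $B$ and $C$, so if the lowest nonzero degrees of $D_0$ and $D_{00}$ do not coincide (which one reads off directly from the $\mathrm{Monty}$ output) they cannot annihilate one another, forcing $\Delta\neq 0$. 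As a concrete anchor one runs $\mathrm{Monty}$ on the minimal instance $\tau=K_2$ (already used in the proof of Theorem~\ref{thm:Godsil}) and confirms that the two coalescence growth series disagree; the lowest-degree analysis then propagates this disagreement to arbitrary admissible $\tau$.
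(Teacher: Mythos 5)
Your setup is sound and parallels the paper's own strategy: the same $A/B/C$ decomposition of non-geodesics is used there, the type-$A$ counts cancel because $G(S_1)\cong G(S_2)$, and the discrepancy must therefore come from root-anchored blocks, measured by the profile series computed with Monty. The computation you defer to Monty does come out as you predict (the paper finds $f_{Geo_0(S_1)}(r)=f_{Geo_0(S_2)}(r)$ for $r\le 7$, while $f_{Geo_0(S_1)}(8)=8919523$ and $f_{Geo_0(S_2)}(8)=8919522$). The problem lies entirely in your last step.

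Your non-cancellation argument is both conditional and non-uniform in $\tau$, and as stated it does not close. It is conditional because it only applies ``if the lowest nonzero degrees of $D_0$ and $D_{00}$ do not coincide,'' a fact you neither verify nor can control; if they coincide, you conclude nothing. It is non-uniform because the weights multiplying $D_0$ and $D_{00}$ inside $\Delta(t)$ are generating functions that depend on $\tau$, so confirming $\Delta\neq 0$ for the anchor $\tau=K_2$ does not ``propagate'': for a different $\tau$ the $\tau$-dependent weights could in principle produce exactly the cancellation you set out to exclude, and the $K_2$ computation is silent about this. What closes the gap --- and is how the paper argues --- is to work at the single minimal length $10=8+1+1$ and run a syllable-length analysis there. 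A block whose count involves the doubly-anchored series (hence $D_{00}$) must have root-interaction patterns on both sides, which forces syllable length at least $5$, hence all blocks of length at most $6$, where all special counts agree; at syllable length $4$ all blocks have length at most $7$, where they also agree. So at length $10$ the only surviving contributions are the syllable-length-$3$ words $w_1u_1w_2$ with $|w_1|=8$, $w_1$ ending in $0$, $u_1\in\Star_{\tau}(0)$, $w_2=0$, together with their mirrors, and the difference of counts equals $2\deg_{\tau}(0)\bigl(f_{Geo^0(S_1)}(8)-f_{Geo^0(S_2)}(8)\bigr)=2\deg_{\tau}(0)\neq 0$ for every rooted tree $\tau$ with at least two vertices. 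This removes both defects at once: $D_{00}$ simply cannot enter at length $10$, so no conspiracy between $D_0$ and $D_{00}$ is possible there, and the resulting coefficient is a positive multiple of $\deg_{\tau}(0)$, which gives the uniformity in $\tau$ for free.
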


\begin{proof} We will prove that $f_{Geo(\Gamma_1)}(10) \neq f_{Geo(\Gamma_2)}(10)$. We use the same argument as in the proof of Lemma \ref{equal_growth}, that is, $f_{Geo(\Gamma_1)}(r)=f_{Geo(\Gamma_2)}(r)$ if and only if $\overline{f_{Geo(\Gamma_1)}}(r)=\overline{f_{Geo(\Gamma_2)}}(r)$, where $\overline{f_{Geo(\Gamma_i)}}(r)$ denotes the numbers of non-geodesics of length $r$, $r \geq 1$. In this proof we use the result of our explicit computations with Monty, which shows that $f_{Geo_0(\Gamma_1)}(r)= f_{Geo_0(\Gamma_2)}(r)$ for $r<8$, but $f_{Geo_0(\Gamma_1)}(8) = f_{Geo^0(\Gamma_1)}(8) = 8919523$ for $G_1$ and $f_{Geo_0(\Gamma_2)}(8) = f_{Geo^0(\Gamma_2)}(8) = 8919522$ for $G_2$.

Any word in $\Gamma_1^*$ has the form $w_1u_1w_2 \dots u_n$, where $u_i \in (\tau\setminus \{0\})^*$, $w_i \in S_1^*$, and $w_i, u_i$ non-empty except for perhaps $w_1$ and $u_n$. A non-geodesic word $w$ in $\Gamma_1^*$ belongs to one of the following sets (or is of the type), depending on its form:
\begin{itemize}
\item[(A)] it either contains non-geodesics $w_i \in (\tau \setminus \{0\})^*$ or $u_j \in S_1^* $, or both, for some $1\leq i,j \leq n$, or 
\item[(B)] all $w_i$ and $u_i$ are geodesic on their respective alphabets, and there exists $1\leq j <n$ such that $0 u_j 0$ is a subword of $w$ and $u_j \in (\Star_{\Gamma_1}(0) \cap \tau)^*$, or
\item[(C)] all $w_i$ and $u_i$ are geodesic on their respective alphabets, and $w$ contains a subword of the form $s 0 s$, where $s \in (\Star_{\Gamma_1}(0) \cap \tau)^*$.
\end{itemize}

For the remaining discussion we only consider words $w$ of length $10$, and call the number of non-empty subwords $u_i$ or $w_i$ the \textit{syllable length} of $w$.  The number of words of type (A) depends only on the geodesic growth series of $\tau$ and $S_1$, and thus it will be equal to the number of words of type (A) in $\Gamma_2$.
Notice that if the syllable length of $w$ is $\leq 2$ there are no words of type (B) or (C), so we obtain the same numbers of words in $\Gamma_1$ and $\Gamma_2$. If the syllable length of $w$ is $>4$ then all $w_i, u_i$ are shorter than or equal to $6$, and our computations show that the numbers of special geodesics of length less than or equal to $6$ in $\Gamma_1$ and $\Gamma_2$ coincide. Thus, a discrepancy may appear only when the syllable length is $3$ or $4$. The words of type (B) or (C) with syllable length $4$ have the form $w= w_1 u_1 w_2 u_2$ (or $w=  u_1 w_2 u_2 w_3$), where $w_1$ ends in $0$ and $w_2$ starts with $0$, or $w_2=0$. But in this case $|w_1|, |w_2| \leq 7$, and our computations show that $f_{Geo_0(\Gamma_1)}(r)=f_{Geo_0(\Gamma_2)}(r)$, up to $r=7$.

Next, we consider the non-geodesics $w$ of syllable length $3$. If they have the form $w=u_1 w_1 u_2$, then $w_1=0$, and the numbers of geodesics $u_i$ is the same since they are all written over the same alphabet determined by the tree $\tau$, so no discrepancy in the numbers of non-geodesics occurs. Thus, it remains to count the words of the form $w=w_1 u_1 w_2$, where $|w_1| \leq 8$, $w_1$ ends in $0$, $|u_1| \geq 1$, $u_1 \in (\Star_{\Gamma_1}(0) \cap \tau)^*$, and $w_2$ starts with $0$, $|w_2| \leq 8$. Again, if $|w_i|\leq 7$ we obtain the same numbers of non-geodesics. However, a discrepancy occurs when $|w_1|=8$ or $|w_2|=8$. 

Indeed, the number of such words is $2(f_{Geo_0(\Gamma_1)}(8)\deg_{\tau}(0))$ in $\Gamma_1^*$ and $2(f_{Geo_0(\Gamma_2)}(8)\deg_{\tau}(0))$ in $\Gamma_2^*$. By using Monty, we obtained $f_{Geo_0(\Gamma_1)}(8) = f_{Geo^0(\Gamma_1)}(8) = 8919523$ for $G_1$, but $f_{Geo_0(\Gamma_2)}(8) = f_{Geo^0(\Gamma_2)}(8) = 8919522$ for $G_2$, so the numbers of non-geodesics in these two groups are distinct.
\end{proof}

Now we can finish the proof of our second main result.

\begin{proof} {\it (of Theorem~\ref{thm2})} This is a straightforward consequence of Theorem~\ref{thm:Godsil} and Lemma~\ref{lem3}. \end{proof}

\medskip

\section*{Acknowledgments}

The authors gratefully acknowledge the support received from the Swiss National Science Foundation: L.C. was supported by PP00P2-144681 (SNSF Professorship); A.K. was supported by P300P2-151316 (Advanced Post-Doc Mobility) and P300P2-151316/2 (CH-Link). The authors are also thankful to Prof. Ruth Kellerhals (University of Fribourg, Switzerland), Prof. Michelle Bucher (University of Geneva, Switzerland), Prof. Robert Young (Courant Institute of Mathematical Sciences, New York, USA) and Dr. Alexey Talambutsa (Steklov Mathematical Institute, Moscow, Russia) for fruitful discussions. 

\medskip

\section*{Appendix}

In this section we give the Python code ``Monty'' that we used in our computations, with comments and remarks. Its on-line copy \cite{Pyth} can be downloaded as a SAGE worksheet from the second author's \href{www.sashakolpakov.wordpress.com/list-of-papers}{web-page.}

We begin by defining the automaton $A$ that recognizes the language of geodesics of a given RACG $G$ whose defining graph $\Gamma$ is given as input.

\begin{verbatim}
def Automaton(t): 
# takes a triangle-free graph, returns the automaton recognising 
# the resp. RACG as a digraph
    CliqueComplex = t.clique_complex(); C = list();
    for s in CliqueComplex.faces().values(): 
        for f in s: C.append(set(f));
    n = len(C); a = DiGraph(); a.add_vertices(range(n)); 
    for i in range(n):
        for v in t.vertices():
            if not(C[i].issuperset([v])):
                st = set(t.vertex_boundary([v])).union([v]);            
                d = set([v]).union(st.intersection(C[i]));
                k = C.index(d);
                a.add_edge((i,k));
    a.set_vertices({i : C[i] for i in range(n)})
    return a;
\end{verbatim}

Given the automaton $A$, we then compute the growth function of its accepted language (in this case, the geodesic language for $G$ with respect to $S$).

\begin{verbatim}
def GrowthFunc(a): 
# takes a geodesic automaton, returns the resp. (geodesic) growth function
    am = a.adjacency_matrix();
    n  = am.nrows();
    R = FractionField(PolynomialRing(QQ, 't')); 
    R.inject_variables(); 
    m = diagonal_matrix([1]*n) - t*am; 
    M = (1/m.det())*m.adjoint();
    ee = [0]*(n-1); 
    ee.append(1); 
    e = vector(ee); 
    w = vector([1]*n);
    func = e*M*w;
    return func.numerator().factor()/func.denominator().factor();
\end{verbatim}

A clique $c$ in the defining graph $\Gamma$ of the RACG $G = G(\Gamma)$ corresponds to a state $q_c$ in the automaton $A$. We need the following auxiliary function in order to determine the index of $q_c$ represented as a vertex of the digraph $A$ (the automaton) created by the procedure \texttt{Automaton}. 

\begin{verbatim}
def Index(a, c): 
# takes a geodesic automaton `a', a clique `c' in the resp. defining graph, 
# returns the vertex of the automaton `a' corresponding to `c'
    l = None;
    for v in a.vertices():
        if a.get_vertex(v) == c:
            l = v;
    return l;
\end{verbatim}

Below we compute the growth function ${_0}\alpha(t)_G$ as described in the proof of Theorem~\ref{thm1}. 

\begin{verbatim}
def GrowthFuncStart0(a): 
# takes a geodesic automaton, returns the growth function for geodesic words 
# starting at the state q, where $\delta(Start, 0) = q$
    am = a.adjacency_matrix();
    n  = am.nrows();
    R = FractionField(PolynomialRing(QQ, 't')); 
    R.inject_variables(); 
    m = diagonal_matrix([1]*n) - t*am; 
    M = (1/m.det())*m.adjoint();
    ind = Index(a, set([0]));
    ee = [0]*n; 
    ee[ind] =  1; 
    e = vector(ee); 
    w = vector([1]*n);
    func = e*M*w;
    return func.numerator().factor()/func.denominator().factor();
\end{verbatim} 

Now we define a function that takes as input the geodesic automaton $A$ for a RACG $G = G(\Gamma)$, a list of cliques $l = [c_0, c_1, \dots, c_k]$ in the respective defining graph $\Gamma$ and returns the growth function for geodesic words starting at the state $q = \delta(q_0, 0)$ that bring $A$ to any of the states described by the cliques in $l$. 

\begin{verbatim}
def GrowthFuncStart0End(a, l): 
# takes an automaton and a list of cliques `l' in the resp. defining graph
# as input,  returns the growth function for geodesic words starting at 
# state q, where $\delta(Start, 0) = q$, and ending at any of the states 
# corresponding to cliques in `l'
    am = a.adjacency_matrix();
    n  = am.nrows();
    R = FractionField(PolynomialRing(QQ, 't')); 
    R.inject_variables(); 
    m = diagonal_matrix([1]*n) - t*am; 
    M = (1/m.det())*m.adjoint();
    ee = [0]*n;
    ind = Index(a, set([0])); 
    ee[ind] =  1; 
    e = vector(ee); 
    ww = [0]*n;
    for c in l:
        ind = Index(a, set(c));
        ww[ind] = 1;
    w = vector(ww);
    func = e*M*w;
    return func.numerator().factor()/func.denominator().factor();
\end{verbatim}

By using a suitable list of cliques $l$ we can compute the functions ${_0^0}\alpha(t)_G$ and ${_0^0\beta(t)}_G$. Namely, in the proof of Theorem~\ref{thm1}, we find

\begin{verbatim}
# the function ${_0^0}\alpha(t)_{G_1}$
a001 = GrowthFuncStart0End(a1, [[0]]); 
# the function ${_0^0}\alpha(t)_{G_2}$
a002 = GrowthFuncStart0End(a2, [[0]]);
# the function ${_0^0}\beta(t)_{G_1}$
b001 = GrowthFuncStart0End(a1, [[1], [1,3], [1,4], [2], [2,5]]);
# the function ${_0^0}\beta(t)_{G_2}$
b002 = GrowthFuncStart0End(a2, [[1], [1,3], [2], [2,4], [2,5]]);
\end{verbatim}

The list \texttt{l = [[1], [1,3], [1,4], [2], [2,5]]} above contains the cliques of $\Gamma_1$ corresponding to the accept states $q$ of the geodesic automaton $A_1$ for $G_1 = G(\Gamma_1)$ such that $\delta(p, 0) = q$, for a state $p$. The list \texttt{l = [[1], [1,3], [2], [2,4], [2,5]]} contains the cliques of $\Gamma_2$ with analogous properties, corresponding to the states of the geodesic automaton $A_2$ for the RACG $G_2 = G(\Gamma_2)$.

The above described Python procedures are also used to perform the computations in the proof of Theorem~\ref{thm2}. 

The on-line version of Monty \cite{Pyth} contains a variation of the \texttt{GrowthFunc} procedure, called \texttt{GrowthFuncBM}, that uses the Berlekamp-Massey algorithm for faster computing. 

\newpage

\bigskip

\begin{flushleft}
\textit{Laura Ciobanu\\
Department of Mathematics\\
University of Neuch\^atel\\
Rue Emile - Argand 11\\
CH-2000 Neuch\^atel, Switzerland\\
}

\emph{e-mail}{:\;\;}\texttt{laura.ciobanu@unine.ch}\\

\vspace*{0.25in}

\textit{Alexander Kolpakov\\
Department of Mathematics\\
University of Toronto\\
40 St. George Street\\
M5S 2E4 Toronto ON, Canada\\
}

\emph{e-mail}{:\;\;}\texttt{kolpakov.alexander@gmail.com}\\
\end{flushleft}

\newpage
\hoffset 0cm \voffset 0cm \textwidth=6.2in \textheight=8.3in
\tolerance=9000 \emergencystretch=5pt \vfuzz=2pt
\parskip=1.2mm

\end{document}